\documentclass[a4paper,12pt]{article}

\usepackage{titlesec}
\usepackage[utf8]{inputenc}
\usepackage[english]{babel}
\usepackage{hyperref}
 \usepackage{enumerate}
\usepackage{amsmath}
\usepackage{amsthm}
\usepackage{amsfonts}
\usepackage{textcomp} 
 \usepackage{mathtools}
\usepackage{enumitem}
\usepackage[mathscr]{eucal}

\setlength{\parindent}{1em}
\setlength{\parskip}{0.2em}

\DeclareMathAlphabet{\mathpzc}{OT1}{pzc}{m}{it}
\topmargin -1 cm
\headheight 0cm
\headsep 0cm
\textheight 25 cm
\textwidth 15 cm

\oddsidemargin 0 cm
\evensidemargin 0 cm

\theoremstyle{plain}
\newtheorem*{theorem*}{Theorem}
\newtheorem{theorem}{Theorem}
\newtheorem{corollary}[theorem]{Corollary}
\newtheorem{proposition}[theorem]{Proposition}

\newtheorem{lemma}[theorem]{Lemma}
\newtheorem{remark}[theorem]{Remark}

\newcounter{para}

\newcommand{\pr}{\mathbb{P}}

\newcommand{\Dcal}{\mathcal{D}}

\newcommand{\R}{\mathbb{R}}

\newcommand{\Z}{\mathbb{Z}}

\renewcommand{\Z}{\mathbf{Z}}
\renewcommand{\Dcal}{\mathcal{D}}

\newcommand{\calP}{\mathcal{P}}

\titleformat{\subsection}[runin]{}{}{}{}[]

\titlespacing{\subsection}{0.0em}{.5ex}{.5ex}[] 


\title{Monotonicity of escape probabilities \\for branching random walks on $\Z^{d}$\vspace{-2mm}}
\author{\textbf{Achillefs Tzioufas}\thanks{\noindent \textsc{Department of Mathematics,   \protect\\  University of Innsbruck,  \protect\\ 
Technikerstrasse 13, 6020,   \protect\\ Innsbruck, Austria. \protect\\}  E-mail: \texttt{achillefs.tzioufas@uibk.ac.at}}}
\begin{document}

\maketitle
\vspace{-5ex}

\begin{abstract} 
\noindent We study nearest-neighbors branching random walks started from a point at the interior of a hypercube. We show that the probability that the process escapes the hypercube is monotonically decreasing with respect to the distance of its starting point from the boundary. We derive as a consequence that at all times the number of particles at a site is monotonically decreasing with respect to its distance from the starting point. 
\end{abstract} 

{ \it Key-words: Branching random walks; spatial-symmetry stochastic comparisons.}

\section{Statement of Results}
{\it The aperiodic nearest-neighbors branching random walk} with offspring distribution $\calP = (P_{i})_{i \geq 0}$ is a discrete-time particle system  $(\beta_{t})_{t \geq 0}$ on the integer lattice $\Z^{d}$, evolving according to the following rule. Every particle generates independently of other particles a random number of offspring distributed according to $\calP$ and dies after one time unit; each offspring particle moves at a position chosen uniformly at random from the $2d+1$  sites located at most at distance $1$ from the location of the parent. We refer to \cite{LZ} and the references therein for background and recent results on this extensively studied process. 
Hereafter, we let  $\Lambda_{n}^{d} =  \{  x \in \Z^{d} \colon \| x \| \leq n\}$ be the $d$-dimensional hypercube of side length $2n$ centred at the origin, and we let $\partial \Lambda_{n}^{d} = \{  x \in \Lambda_{n}^{d} \colon \| x \| = n\}$ be its boundary, and we let also $\mathring{\Lambda}_{n}^{d} = \Lambda_{n}^{d} \backslash \partial \Lambda_{n}^{d}$ be its interior, where $ \| x \| = \max\{ |x_{i}| \colon i =1, \dots, d\}$. 
We define the usual stochastic domination as follows. If $X$ and $Y$ are random variables with domain $\mathbb{D}$ and cumulative distribution functions, respectively, $F$ and $G$, we write $X \leq_{st.} Y$ to denote 
that: $F(x) \geq G(x)$, for all $x \in \mathbb{D}$.\footnote{where we note that this condition is easily seen to be weaker than, and is due to the so-called quantile coupling in fact equivalent to, that: there exists a joint law  $H$ of $X$ and $Y$ with marginals, respectively, $F$ and $G$, such that $H(X \leq Y) = 1$.}
Further, we define the usual partial order on the positive orthant as follows. If $x,y \in \Z_{+}^{d}$, then we write $x \preceq y$ whenever: $x_{i} \leq y_{i}$, for all $ i= 1, \dots, d$. \footnote{\textup{where we note that, if $x,x'$ are such that $x'$ is obtained by $x$ by reflection over a coordinate axis, then $\tau_{n}^{x} \stackrel{st.}=  \tau_{n}^{x'}$, and hence considering $x,y \in \Z_{+}^{d}$ in Theorems \ref{thmbr} and \ref{thmbr2} and, likewise in Corollary \ref{cor1}, incurs no loss of generality.}} 

\begin{theorem}\label{thmbr} 
We let $(\beta_{t}^{x})$ be the aperiodic nearest-neighbors branching random walk on  $\Z^{d}$ initiated by a particle at $x$. We let $ \tau_{n}^{x} = \inf\{ t\colon \beta_{t}^{x}(y) \geq 1, \mbox{for some } y \in \partial \Lambda_{n}^{d}\}$ and we let $p_{n}(x) = \pr(  \tau_{n}^{x}  < \infty)$, $ x \in \mathring{\Lambda}_{n}^{d}$. We have that 
\begin{equation}\label{thm1des}
x \preceq y  \hspace{2mm}\Longrightarrow\hspace{2mm} \tau_{n}^{y} \leq_{st.}  \tau_{n}^{x}, 
\end{equation}
and, in particular, $p_{n}(x) \leq p_{n}(y)$, for all $n \geq 1$. 
\end{theorem}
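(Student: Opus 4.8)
\emph{Proof sketch.} The plan is to fold the walk across each coordinate hyperplane and then run a monotone coupling on the resulting reflected branching random walk. We are given $x,y\in\mathring{\Lambda}^{d}_{n}$ with $x\preceq y$, hence $x,y\in\Z_{+}^{d}$ (the general case reduces to this by the coordinate-reflection symmetry of the footnote). For $z\in\Z^{d}$ put $\phi(z)=(|z_{1}|,\dots,|z_{d}|)\in\Z_{+}^{d}$, and let $\phi(\beta^{x}_{t})$ denote the configuration obtained by applying $\phi$ to each particle of $\beta^{x}_{t}$. Since the dynamics is invariant under every reflection $z\mapsto(z_{1},\dots,-z_{i},\dots,z_{d})$, the law of the set of folded offspring positions of a particle sitting at $z$ depends on $z$ only through $\phi(z)$; summing over the particles present, the conditional law of $\phi(\beta^{x}_{t+1})$ given $\beta^{x}_{t}$ depends only on $\phi(\beta^{x}_{t})$, so $\phi(\beta^{x})$ is itself a branching random walk on $\Z_{+}^{d}$ in which every particle at $w$ is replaced by $\calP$-many offspring placed independently from the kernel $\bar K(w,\cdot)$, the law of $\phi(w+\eta)$ with $\eta$ uniform on $\{0,\pm e_{1},\dots,\pm e_{d}\}$; equivalently $\bar K$ fixes $w$, or moves it to $w+e_{i}$, or moves it to $w-e_{i}$ for a uniform coordinate $i$, a step into $-e_{i}$ being reflected back to $w+e_{i}$ whenever $w_{i}=0$. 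Because $\phi$ preserves $\|\cdot\|$ and a particle never moves by more than one lattice step, $\tau^{x}_{n}=\inf\{t\colon R^{x}_{t}\ge n\}$, where $R^{x}_{t}=\max\{\|w\|\colon w\text{ a particle of }\phi(\beta^{x}_{t})\}$ (with $\max\emptyset:=-\infty$). The point of folding is exactly that it fuses each pair of opposite faces $\{z_{i}=\pm n\}$ into one target: merely running the two un-folded walks in lock-step, so that each $\beta^{y}$-particle equals the matching $\beta^{x}$-particle plus $y-x$, would control escape through the faces $\{z_{i}=n\}$ but leave the opposite faces $\{z_{i}=-n\}$ uncontrolled, and it is precisely this asymmetry that reflecting removes.

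The crux is that $\bar K$ is stochastically monotone for the coordinatewise order: $\int h\,d\bar K(w,\cdot)\le\int h\,d\bar K(w',\cdot)$ for all $w\preceq w'$ in $\Z_{+}^{d}$ and every non-decreasing $h\colon\Z_{+}^{d}\to\R$. Telescoping along a monotone lattice path from $w$ to $w'$ reduces this to $w'=w+e_{j}$, where it becomes a comparison of the atoms of $\bar K(w,\cdot)$ with those of $\bar K(w+e_{j},\cdot)$ weighted by $h$. If $w_{j}\ge1$ the natural pairing works term by term, each atom of $\bar K(w,\cdot)$ being dominated in $\preceq$ by the matching atom of $\bar K(w+e_{j},\cdot)$. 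The one delicate configuration is $w_{j}=0$: then $\bar K(w,\cdot)$ has no ``$w-e_{j}$'' atom --- the $-e_{j}$-step is reflected onto $w+e_{j}$, doubling its weight --- and one must re-pair, matching this reflected mass against the $w+2e_{j}$ atom of $\bar K(w+e_{j},\cdot)$ (using $h(w+e_{j})\le h(w+2e_{j})$) and matching the $w=(w+e_{j})-e_{j}$ atom of $\bar K(w+e_{j},\cdot)$ against the ``stay'' atom of $\bar K(w,\cdot)$ at $w$. This re-pairing is the one point that requires care, though it is elementary. Granting it, Strassen's theorem supplies, for each $w\preceq w'$, a coupling $\mu_{w,w'}$ of $\bar K(w,\cdot)$ and $\bar K(w',\cdot)$ supported on $\{(u,u')\colon u\preceq u'\}$.

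Finally, couple $\phi(\beta^{x})$ and $\phi(\beta^{y})$ along a common genealogy: matched particles produce the same number of offspring (a shared $\calP$-sample), the offspring are matched in pairs by birth order, and each matched offspring pair, given the positions $w\preceq w'$ of its matched parents, is placed according to $\mu_{w,w'}$, independently over distinct pairs and generations. Starting from the single particles $x\preceq y$, an immediate induction over generations preserves the invariant that every matched pair of live particles $(w,w')$ has $w\preceq w'$. Hence, at every time $t$, each particle of $\phi(\beta^{x}_{t})$ is dominated coordinatewise by its partner in $\phi(\beta^{y}_{t})$ and therefore has no larger $\|\cdot\|$, so $R^{x}_{t}\le R^{y}_{t}$ for all $t$ and consequently $\tau^{y}_{n}\le\tau^{x}_{n}$ almost surely under the coupling. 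This proves \eqref{thm1des}; and since the same coupling yields $\{\tau^{x}_{n}<\infty\}\subseteq\{\tau^{y}_{n}<\infty\}$, it also gives $p_{n}(x)\le p_{n}(y)$, completing the proof. (One may instead avoid the coupling: by induction on $t$ the confinement probability $q_{t}(x):=\pr(\tau^{x}_{n}>t)$ is $\preceq$-non-increasing, because $q_{t}(x)$ equals the offspring generating function applied to the mean of $q_{t-1}$ over the $2d+1$ sites within distance $1$ of $x$, with $q_{t-1}$ read as $0$ on $\partial\Lambda^{d}_{n}$ and extended off $\Z_{+}^{d}$ by coordinate reflection; the induction step is the same inequality established above for $\bar K$.)
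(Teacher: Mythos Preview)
Your proof is correct and takes a genuinely different route from the paper's.

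The paper couples $\beta^{(0,0)}$ and $\beta^{(1,0)}$ by reflecting across the perpendicular bisector $\{x_{1}=1/2\}$: each process is split as $\sigma+\alpha$, the $\sigma$-parts are run identically, while the $\alpha$-parts are exact mirror images of one another confined to opposite half-spaces, with any $\alpha$-offspring that would cross the axis being \emph{relabelled} as $\sigma$. The asymmetry of the box about $x_{1}=1/2$ (the mirror image of $V_{-n+1}$ is $V_{n}\subset\partial\Lambda^{2}_{n}$, whereas $V_{-n+1}\subset\mathring{\Lambda}^{2}_{n}$) then forces $\tau^{1}_{n}\le\tau^{0}_{n}$.

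You instead fold both processes onto $\Z_{+}^{d}$ via $z\mapsto(|z_{1}|,\dots,|z_{d}|)$, identify the folded dynamics as a branching random walk with a reflected one-step kernel $\bar K$, and prove $\bar K$ is stochastically monotone for $\preceq$, whence a monotone particle-by-particle coupling yields $R^{x}_{t}\le R^{y}_{t}$ and $\tau^{y}_{n}\le\tau^{x}_{n}$. Your approach is dimension-free and conceptually cleaner: it reduces the theorem to a one-line kernel inequality and invokes Strassen, rather than tracking four labelled particle species through an inductive relabelling scheme. The paper's decomposition, on the other hand, is more explicit and is the template it reuses (with a diagonal axis, and with the axis $V_{1}$) to handle the periodic case in Theorem~\ref{thmbr2} and Corollary~\ref{cor1}, where your folded-kernel monotonicity would fail.

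It is worth noting that both arguments use aperiodicity at the same spot: in the paper, the ``stay'' option is what allows the synthetic step (ii) to relabel $\alpha$-offspring landing on $V_{0}$ as $\sigma$-particles matched to $\alpha^{1}$-offspring landing on $V_{0}$; in your argument, the ``stay'' atom of $\bar K(w,\cdot)$ at $w$ is exactly what you pair against the $-e_{j}$ atom of $\bar K(w+e_{j},\cdot)$ when $w_{j}=0$. Without it the re-pairing breaks (one is left matching a $w+e_{j}$ atom against a $w$ atom), which is why the periodic model requires the separate and weaker treatment of Theorem~\ref{thmbr2}.
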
 


The proof of Theorem \ref{thmbr} comprises a stochastic comparison that relies on spatial-symmetry and uses the assumption that offspring may stay at the same location as their parent  in an essential way. We next consider the corresponding process in which this assumption is dropped.  {\it The periodic  nearest-neighbors branching random walk} with offspring distribution $\calP = (P_{i})_{i \geq 0}$ is a discrete-time particle system  $(\delta_{t})_{t \geq 0}$ on the integer lattice $\Z^{d}$ that evolves according to the following rules. Every particle generates independently of other particles a random number of offspring distributed according to $\calP$ and dies after one time unit. Each offspring particle moves at a position chosen uniformly at random from the $2d$ neighbors of the site of the parent. 

\begin{theorem}\label{thmbr2} 
We let $(\delta_{t}^{x})$ be the periodic nearest-neighbors branching random walk on $\Z^{d}$ initiated by a particle at $x$. We let $ \tau_{n}^{x} = \inf\{ t\colon \delta_{t}^{x}(y) \geq 1, \mbox{for some } y \in \partial \Lambda_{n}^{d}\}$ and let $p_{n}(x) = \pr(  \tau_{n}^{x}  < \infty)$, $ x \in \mathring{\Lambda}_{n}^{d}$. We have that
\begin{equation}\label{thm2des}
x \preceq y  \hspace{2mm}\Longrightarrow\hspace{2mm}  p_{n}(x) \leq p_{n}(y),
\end{equation}
for all  $n \geq 1$. 
\end{theorem}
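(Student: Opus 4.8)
The plan is to pass to the \emph{confinement} probabilities $q_n(x):=1-p_n(x)=\pr(\tau_n^x=\infty)$ and to prove the equivalent statement that $q_n(x)\ge q_n(y)$ whenever $x\preceq y$. By the footnote we may take $x,y\in\Z_+^d$, and, since $x$ can be joined to $y$ by a chain $x=w^{(0)}\preceq w^{(1)}\preceq\cdots\preceq w^{(k)}=y$ of unit increments with every $w^{(j)}\in\Z_+^d\cap\mathring{\Lambda}_n^d$, it suffices to show
\[
q_n(x)\ge q_n(x+e_1)\qquad\text{whenever }x\in\Z_+^d\cap\mathring{\Lambda}_n^d\ \text{and}\ x+e_1\in\mathring{\Lambda}_n^d,
\]
the choice of coordinate being immaterial by symmetry ($e_1$ denotes the first coordinate vector). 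Conditioning on the first generation gives the nonlinear fixed–point identity
\[
q_n(z)=g\!\Big(\tfrac1{2d}\sum_{u\sim z}q_n(u)\Big),\qquad z\in\mathring{\Lambda}_n^d,
\]
where $g(s)=\sum_{i\ge0}P_is^i$ and $q_n$ is extended by $0$ on $\Z^d\setminus\mathring{\Lambda}_n^d$. Excluding the degenerate cases $P_0=1$ ($q_n\equiv1$) and $P_1=1$ ($q_n\equiv0$), one has $q_n<1$ on the finite set $\mathring{\Lambda}_n^d$, because from any interior point there is positive probability that a single surviving line of descendants walks onto $\partial\Lambda_n^d$; moreover $q_n\le\eta$, the extinction probability, since a surviving branching random walk almost surely eventually places a particle on $\partial\Lambda_n^d$.

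The argument is a discrete ``moving plane''. Fix $x$ as above, let $\rho$ be the lattice reflection $\rho(z)=(2x_1+1-z_1,z_2,\dots,z_d)$, whose mirror hyperplane $\{z_1=x_1+\tfrac12\}$ lies between $x$ and $x+e_1$, and put $\Sigma=\{z\in\mathring{\Lambda}_n^d:z_1\ge x_1+1\}$. Two observations drive the proof. First, $\rho$ is a graph automorphism of $\Z^d$ and, crucially because $x_1\ge0$, a direct computation gives $\rho(\Sigma)\subseteq\mathring{\Lambda}_n^d$: reflecting the ``far half'' of the box across a plane situated on the centre side of the origin keeps it inside. Second, set $w:=q_n\circ\rho-q_n$ (with $q_n$ extended by $0$); then $w$ is antisymmetric, $w\circ\rho=-w$; on $\partial\Lambda_n^d\cap\{z_1\ge x_1+1\}$ we have $w=q_n\circ\rho\ge0$ trivially; and, by the first observation and the fixed–point identity, $q_n\circ\rho$ satisfies the same identity as $q_n$ at every point of $\Sigma$, whence
\[
w(z)=c(z)\cdot\tfrac1{2d}\sum_{u\sim z}w(u),\qquad z\in\Sigma,
\]
with $c(z)$ a difference quotient of $g$ between two points of $[0,\gamma]$, $\gamma:=\max_{\mathring{\Lambda}_n^d}q_n<1$. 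A short convexity argument (using $g(1)=1$ and, in the supercritical regime, the bound $q_n\le\eta$ together with $g'(\eta)<1$) shows that $c(z)\le\kappa$ for a constant $\kappa<1$ in every non-degenerate case.

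It remains to run the maximum principle. Suppose $m:=\min_{\Sigma}w<0$; this is attained over a finite set at some $z^{\ast}\in\Sigma$ (it cannot lie on $\partial\Lambda_n^d$, where $w\ge0$). If $z^{\ast}_1\ge x_1+2$, all $2d$ neighbours of $z^{\ast}$ lie in $\{z_1\ge x_1+1\}\cap\Lambda_n^d$, so the average of $w$ over them is $\ge m$, and the displayed identity forces $m=w(z^{\ast})\ge c(z^{\ast})\,m$, i.e.\ $c(z^{\ast})\ge1$, contradicting $c(z^{\ast})\le\kappa<1$. If $z^{\ast}_1=x_1+1$, then exactly one neighbour, $z^{\ast}-e_1=\rho(z^{\ast})$, lies outside $\{z_1\ge x_1+1\}$, and antisymmetry gives $w(z^{\ast}-e_1)=-w(z^{\ast})=-m$; inserting this and bounding the remaining $2d-1$ neighbours below by $m$ turns the identity into $m=c(z^{\ast})S$ with $S\ge\tfrac{(2d-2)m}{2d}$, which (since $m<0$) first forces $S<0$ and then $c(z^{\ast})\ge\tfrac{d}{d-1}$ — already absurd when $d=1$, and impossible in general. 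Hence $m\ge0$, i.e.\ $w\ge0$ on $\Sigma$; applying this at $z=x+e_1\in\Sigma$ and using $\rho(x+e_1)=x$ gives $q_n(x)-q_n(x+e_1)=w(x+e_1)\ge0$, as required.

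The step I expect to be the main obstacle is the maximum principle at the column $\{z_1=x_1+1\}$ adjacent to the reflection plane: there the antisymmetry of $w$ feeds the value at the minimiser back into its own defining identity, and one must verify that the resulting arithmetic is still incompatible with $c(z^{\ast})\le1$ — this is where the precise neighbour count and the reduction to $c(z)\le\kappa<1$ are essential. A secondary but indispensable point is the inclusion $\rho(\Sigma)\subseteq\mathring{\Lambda}_n^d$ — that placing the mirror plane between $x$ and $x+e_1$ rather than through the centre of the box does not push reflected interior points onto or past $\partial\Lambda_n^d$ — which is exactly where the hypothesis $x\in\Z_+^d$ enters, and which also explains why this route yields only the comparison of escape probabilities, and not the stronger stochastic domination of $\tau_n$ obtained in Theorem~\ref{thmbr}.
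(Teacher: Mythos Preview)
Your argument is correct and genuinely different from the paper's. The paper does not work with the scalar fixed--point identity $q_n(z)=g\bigl(\tfrac{1}{2d}\sum_{u\sim z}q_n(u)\bigr)$ at all; instead it builds two explicit mirror couplings (Propositions~\ref{prop1} and~\ref{prop2}) that yield the stochastic orderings $\tau_n^{(0,0)}\le_{st.}\tau_n^{(0,2)}$ and $\tau_n^{(0,0)}\le_{st.}\tau_n^{(1,1)}$, and then combines these with a one--step Markov expansion of $q_n(0,0)$ and $q_n(1,0)$ in a short proof by contradiction. The parity of the periodic walk is what forces the two separate couplings (a direct analogue of Lemma~\ref{deco1} for the pair $(0,0),(1,0)$ is unavailable because offspring cannot stay put), and the contradiction step glues them into the desired unit--step comparison. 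Your moving--plane / maximum--principle route sidesteps this entirely: once the identity for $w=q_n\circ\rho-q_n$ is written down, the contraction $c(z)\le\kappa<1$ together with the antisymmetry at the reflecting column handles the unit step in one stroke, uniformly over the starting point $x\in\Z_+^d$.

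What each approach buys: the paper's couplings produce pathwise inequalities and hence the stronger conclusion $\tau_n^{x}\ge_{st.}\tau_n^{y}$ for the auxiliary pairs in Propositions~\ref{prop1}--\ref{prop2} (information it later reuses for Corollary~\ref{cor1}); your analytic argument is cleaner and immediately general in $x$, but by design yields only the comparison of the probabilities $p_n$, exactly as Theorem~\ref{thmbr2} states. One small point worth tightening in your write--up is the supercritical input $q_n\le\eta$: the assertion that a surviving nearest--neighbour branching random walk almost surely hits $\partial\Lambda_n^d$ is standard (linear spread of the extremal particle, or a spine argument), but since it is the sole place where $g'(\xi)\le\kappa<1$ could fail, a one--line justification or reference would make the proof self--contained. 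Everything else---the inclusion $\rho(\Sigma)\subset\mathring{\Lambda}_n^d$ from $x_1\ge0$, the boundary sign of $w$, and the arithmetic at the column $z_1=x_1+1$ giving $c(z^\ast)\ge d/(d-1)>1$---checks out.
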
 
Theorem \ref{thmbr2} is shown via a proof by contradiction argument that reduces the proof to two spatial-symmetry stochastic comparisons, a consequence of one of which is given in the next statement.  We next consider the {\it continuous-time nearest-neighbors branching random walk} parameter $\lambda>0$.  This is a particle system $(\zeta_{t})_{t \in \R_{+}}$ on $\Z^{d}$ in which each particle dies after an independent intensity 1 exponential time and generates offspring at each of the $2d$ neighboring sites according to independent intensity $\lambda>0$ inter-arrival exponential times, and, we allow for initial states with only a finite number of particles.  Formally, $\zeta_{t} = (\zeta_{t}(x)\colon x\in \Z^{d})$ is Markov process with state-space $\{\zeta \colon \sum_{x} \zeta(x) < \infty\}$ and transition rates:  $\zeta_{t}(x) \rightarrow \zeta_{t}(x) - 1$, at rate $\zeta_{t}(x)$, and, $\zeta_{t}(x) \rightarrow \zeta_{t}(x) + 1$, at rate $\lambda \sum_{|y- x| = 1} \zeta_{t}(y)$, where $|\cdot|$ denotes $L^{1}$-distance, and where $\sum_{x}\zeta_{0}(x) < \infty$.  We refer to \cite{BZ} and the references therein for background and recent results on this extensively studied process

\begin{corollary}\label{cor1} 
We let $(\zeta_{t}^{o})$  be the continuous-time nearest-neighbors branching random walk on  $\Z^{d}$ initiated by a particle at the origin. We have that
\begin{equation}\label{cor1des}
 x \preceq y  \hspace{2mm}\Longrightarrow\hspace{2mm}  \zeta_{t}^{o}(x) \leq_{st.} \zeta_{t}^{o}(y), 
\end{equation}
for all $t \geq 0$. 
\end{corollary}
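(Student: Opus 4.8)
The plan is to obtain the continuous-time statement as a limit of the discrete-time periodic branching random walk, for which Theorem \ref{thmbr2} (extended from escape probabilities to a monotone comparison of particle counts) applies. First I would recall that the continuous-time nearest-neighbors branching random walk $(\zeta_{t}^{o})$ is obtained as a scaling limit of the periodic discrete-time branching random walk $(\delta_{t}^{x})$: one speeds up time and couples the two by letting each discrete step correspond to a short time increment $1/m$, with the offspring law $\calP^{(m)}$ chosen so that, over a time interval of length $t$, the aggregate death rate converges to intensity $1$ and the aggregate birth rate at each neighbor converges to intensity $\lambda$. Concretely, over $\lfloor mt\rfloor$ steps the $m$-dependent branching mechanism is tuned (e.g. a particle produces on average $1+ (2d\lambda+o(1))/m$ offspring, distributed so that with the complementary probability it simply persists and moves to a uniformly chosen neighbor) so that the generator of the rescaled discrete chain converges to that of $(\zeta_{t})$; since both processes have locally finite configurations at all times, this convergence holds in the sense of finite-dimensional distributions, and in particular $\delta_{\lfloor mt\rfloor}^{x} \Rightarrow \zeta_{t}^{x}$ coordinatewise as $m\to\infty$.

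The second ingredient is the promotion of Theorem \ref{thmbr2} from a comparison of escape probabilities to the pointwise stochastic comparison $x\preceq y \Rightarrow \delta_{t}^{o}(x)\le_{st.}\delta_{t}^{o}(y)$ for the periodic walk. I would argue this exactly as the corresponding implication is presumably extracted in the paper's treatment: fix $n$ large, and observe that $\{\delta_{t}^{o}(x)\ge k\}$ can be read off from a hitting event for an auxiliary process of the type to which the escape-probability comparison of Theorem \ref{thmbr2} applies, after translating so that $x$ (resp.\ $y$) plays the role of a boundary site relative to the origin; the spatial-symmetry stochastic comparison underlying Theorem \ref{thmbr2} then yields $\pr(\delta_{t}^{o}(x)\ge k)\le \pr(\delta_{t}^{o}(y)\ge k)$ for every $k\ge 1$, which is precisely $\delta_{t}^{o}(x)\le_{st.}\delta_{t}^{o}(y)$. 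Equivalently, and more self-containedly, one can run the same coupling argument that proves Theorem \ref{thmbr2} but track the number of particles at the target site rather than merely whether the boundary has been reached; since $x\preceq y$ means $y$ is farther from the origin, the coupling sends at least as many particles to $y$ as to $x$ at every time.

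Finally I would assemble the two pieces: for fixed $t\ge 0$ and $k\ge 1$, apply the discrete comparison along the rescaling to get $\pr(\delta_{\lfloor mt\rfloor}^{o}(x)\ge k)\le \pr(\delta_{\lfloor mt\rfloor}^{o}(y)\ge k)$ for all $m$, and then pass to the limit $m\to\infty$ using the weak convergence $\delta_{\lfloor mt\rfloor}^{o}\Rightarrow \zeta_{t}^{o}$ at the sites $x$ and $y$ (the integer-valued marginals converge in distribution, so the tail probabilities converge at every continuity point, which here is every integer threshold since the limit is $\Na$-valued). This gives $\pr(\zeta_{t}^{o}(x)\ge k)\le \pr(\zeta_{t}^{o}(y)\ge k)$ for all $k$, i.e.\ $\zeta_{t}^{o}(x)\le_{st.}\zeta_{t}^{o}(y)$, which is \eqref{cor1des}.

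The main obstacle I anticipate is making the first step rigorous: one must exhibit a single coupling (or at least a convergence statement uniform enough to commute with the inequality) tying the continuous-time process to a sequence of periodic discrete-time walks whose offspring distributions vary with $m$, and check that the hypotheses of Theorem \ref{thmbr2} — stated for a \emph{fixed} offspring law $\calP$ — are met for each $\calP^{(m)}$ in the approximating sequence. If one prefers to avoid the approximation altogether, the alternative is to redo the graphical-representation coupling directly in continuous time: build $\zeta_{t}^{o}$ from a family of independent Poisson clocks (death clocks of rate $1$, birth clocks of rate $\lambda$ on oriented edges) and construct, on the same probability space, a coupling realizing the spatial reflection/translation symmetry used for Theorem \ref{thmbr2}, so that the progeny landing at the nearer site $x$ is dominated path-by-path by the progeny landing at the farther site $y$. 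That route sidesteps the limit but requires re-proving the symmetry comparison in the continuous setting; either way the combinatorial heart is the same reflection argument already established for the discrete walks.
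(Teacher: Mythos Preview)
Your overall architecture (discrete monotonicity $+$ scaling limit) matches the paper's, but the approximation you propose does not work, and this is a genuine gap rather than a detail. In the continuous-time process $(\zeta_t)$ particles \emph{sit at their site} until they die or spawn a child at a neighbor; in the periodic walk $(\delta_t)$ every particle \emph{moves} at every step. Speeding up $(\delta_t)$ by a factor $m$ therefore does not produce a process whose generator converges to that of $(\zeta_t)$: a ``persisting'' particle in your scheme performs $\lfloor mt\rfloor$ nearest-neighbor steps, so it diffuses away rather than staying put, and no choice of $\calP^{(m)}$ repairs this. The paper anticipates exactly this issue: Lemma~\ref{gdeco1} is stated not for $(\delta_t)$ but for its generalization $(\gamma_t)$ in which each particle additionally \emph{survives in place} with probability $\pi$. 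Taking $\pi_N=\sqrt{1-1/N}\to 1$ and a vanishing Bernoulli offspring law, and reading the process on the even sublattice at even times (so that a surviving particle returns to its own site after two steps), one obtains $^{N}\gamma_{2\lfloor Nt\rfloor}^{(0,0)}(2x,2y)\stackrel{d}{\to}\zeta_t^{o}(x,y)$. Your sketch misses both the survival mechanism and the parity/sublattice rescaling.

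The second ingredient is also off target. The paper does not ``promote'' Theorem~\ref{thmbr2} to a particle-count comparison via an auxiliary hitting event; it reads the comparison directly off the coupling of Lemma~\ref{gdeco1}. From the decomposition $\gamma^{(0,0)}=\sigma^{(0,0)}+\alpha^{(0,0)}$ with $\alpha^{(0,0)}\equiv 0$ on $E_1$ and $\sigma^{(0,0)}=\sigma^{(2,0)}\le\gamma^{(2,0)}$, one gets $\gamma_{2n}^{(0,0)}(x,y)\le\gamma_{2n}^{(2,0)}(x,y)$ pathwise for $x\ge 2$, and then translation invariance converts this into the desired one-process inequality between sites differing by $2$ in a coordinate. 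Your ``hitting event for an auxiliary process'' reformulation of $\{\delta_t^o(x)\ge k\}$ is not spelled out and is not obviously correct; the alternative you mention---tracking particle counts through the same reflection coupling---is exactly what Lemma~\ref{gdeco1} delivers, but again for $(\gamma_t)$, not $(\delta_t)$, which is the point.
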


 \section{Proofs}
We explicitly treat the planar case only, since the modifications required for higher dimensions are in all cases straightforward. 
 
\begin{proof}[Proof of Theorem \ref{thmbr}.]
To simplify notation, we write $(\beta_{t}^{0})_{t \geq 0}$ for $(\beta_{t}^{(0, 0)})_{t \geq 0}$ and $(\beta_{t}^{1})_{t \geq 0}$ for $(\beta_{t}^{(1, 0)})_{t \geq 0}$ and, further, we write $\tau_{n}^{0}$ for $\tau_{n}^{(0, 0)}$, and $\tau_{n}^{1}$ for $\tau_{n}^{(1, 0)}$.  We will prescribe a stochastic comparison\footnote{i.e., a way to define $(\beta_{t}^{0})_{t \geq 0}$ and $(\beta_{t}^{1})_{t \geq 0}$ on a common probability space with the correct marginal distribution for each one of them.} such that   
\begin{equation}\label{desbrw}
\tau_{n}^{1} \leq \tau_{n}^{0}, \mbox{ a.s..}
\end{equation}
where, we note that (\ref{desbrw}) implies (\ref{thm1des}) and suffices to complete the proof, since the law of $(\beta_{t}^{(1,0)})$ is invariant with respect to permutations of the coordinates of $(1,0)$. The following definitions are in order. We let $V_{r} = \{ (x,y) \in \Z^{2} \colon x = r\}$ where $r \in \Z \cup \{ 1/2\}$. We also let 
$H_{k} = \{ (x,y) \in \Z^{2} \colon y = k\}, k \in \Z$.  We note that $V_{1/2}$ will play the role of a symmetry axis in our stochastic comparison below. We let 
\[
W_{1/2} = V_{0} \cup V_{-1}, \dots \cup V_{-n+1} \mbox{ and } E_{1/2} = V_{1} \cup V_{2}, \dots\cup V_{n}, 
\]
We let $\phi$ be the bijection of every point in $W_{1/2}$ to its mirror image point with respect to $V_{1/2}$ in $E_{1/2}$, which is, we let 
\[
\phi\colon (x,y) \mapsto (-x+1, y), 
\] 
$(x,y) \in W_{1/2}$, where we note that $\phi$ maps points on $V_{-n+1}  \cap \Lambda_{n}^{2}$ to points with the same second coordinate on $V_{n} \cap \Lambda_{n}^{2}$, i.e.,   that
\begin{equation}\label{note0}
\phi(-n+1,y) = (n,y), 
\end{equation}
$y \in \Z$.  In addition, we let 
\[
\Phi(f(x,y)) = f(\phi(x,y)),
\]
$f \colon \Lambda_{n}^{2} \rightarrow \{0,1,\dots\}$. 


The next statement regards a stochastic comparison of $(\beta_{t}^{0})$ and $(\beta_{t}^{1})$ which allows us to decompose each one of them into two non-negative processes which are related amongst them. The idea of the proof of Lemma \ref{deco1} next is a more elaborate version of that of Lemma \ref{gdeco1} below, and thus it is more accessible in the context of the latter statement; see Remark \ref{remgdeco1} below in this regard. 
\begin{lemma}\label{deco1}
We have that 
\begin{equation}\label{eq11}
\beta_{t}^{0} = \sigma_{t}^{0}  + \alpha_{t}^{0} \hspace{2mm}\mbox { and }\hspace{2mm} \beta_{t}^{1} = \sigma_{t}^{1}  + \alpha_{t}^{1}, 
\end{equation}
where $( \sigma_{t}^{0})$, $(\alpha_{t}^{0})$, $( \sigma_{t}^{1})$, $(\alpha_{t}^{1})$  satisfy the following properties:
\begin{equation}\label{eq12}
\sigma_{t}^{0} = \sigma_{t}^{1},  
\end{equation}
\begin{equation}\label{eq14}
\alpha_{t}^{0}(x,y) = 0, \forall (x,y) \in E_{1/2}, \mbox{ and } \alpha_{t}^{1}(x,y) = 0, \forall (x,y) \in W_{1/2},
\end{equation}
\begin{equation}\label{eq13}
\Phi(\alpha_{t}^{0}) = \alpha_{t}^{1}, 
\end{equation}
where we let $\alpha_{0}^{0}(0,0)  = 1$ and, $\alpha_{0}^{0}(x,y)  = 0$, otherwise; and, we let $\alpha_{0}^{1}(1,0) = 1$ and, $\alpha_{0}^{1}(x,y)  = 0$, otherwise; and we also let $\sigma_{0}^{0}(x,y) = \sigma_{0}^{1}(x,y) = 0$, $\forall (x,y)$.
\end{lemma}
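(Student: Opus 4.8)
The plan is to construct a coupling of $(\beta_t^0)$ and $(\beta_t^1)$ by decomposing the initial particle's progeny in a symmetry-aware fashion, carried out recursively in time. First I would set up the idea: the processes differ only in the location of their initial particle, $(0,0)$ versus $(1,0)$, and these two points are mirror images of each other across the axis $V_{1/2}$. So at time $0$ we declare $\sigma_0^0=\sigma_0^1\equiv 0$ (there is no ``shared'' part yet) and let $\alpha_0^0$ be the indicator of $(0,0)$ and $\alpha_0^1=\Phi(\alpha_0^0)$ be the indicator of $(1,0)$; this makes (\ref{eq12}), (\ref{eq14}), (\ref{eq13}) hold at $t=0$. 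The key point of the decomposition is that $\alpha^0$ should track the part of $\beta^0$ that, so far, has never crossed the symmetry axis $V_{1/2}$ (hence lives in $W_{1/2}$), mirrored to give $\alpha^1$; while $\sigma^0=\sigma^1$ tracks the particles that have already ``crossed over'' and whose evolution can therefore be shared identically between the two processes.

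The recursion from time $t$ to time $t+1$ proceeds as follows. By induction hypothesis we have $\beta_t^0=\sigma_t^0+\alpha_t^0$ and $\beta_t^1=\sigma_t^1+\alpha_t^1$ with the three stated properties. For the particles counted in $\sigma_t^0=\sigma_t^1$, I couple their offspring-numbers and displacements to be literally identical in the two processes; this preserves (\ref{eq12}), and these contributions go into $\sigma_{t+1}^0=\sigma_{t+1}^1$. For the particles counted by $\alpha_t^0$ — all of which lie in $W_{1/2}=V_0\cup\cdots\cup V_{-n+1}$ — I use the mirror bijection $\phi$ to couple each such particle in $\beta^0$ with its mirror-image particle in $\beta^1$ (well defined because $\Phi(\alpha_t^0)=\alpha_t^1$): same offspring count, and displacements that are mirror images across $V_{1/2}$. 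Here is where the assumption that an offspring may stay put (the ``$2d+1$'' choices, aperiodic case) is used in an essential way: a particle of $\alpha_t^0$ sitting on $V_0$ has a neighbor on $V_1\subset E_{1/2}$, so some of its offspring may land on the far side of the axis. When an offspring of an $\alpha^0$-particle lands in $W_{1/2}$, it stays in $\alpha_{t+1}^0$ (and its mirror in $\alpha_{t+1}^1$); when an offspring lands on $V_{1/2}$'s boundary column $V_1$, i.e., it would have crossed the axis, I transfer \emph{both} that offspring and its mirror image into $\sigma_{t+1}^0=\sigma_{t+1}^1$ — they now coincide, because an offspring of a $V_0$-particle landing on $V_1$ in $\beta^0$ is exactly the mirror of an offspring of the mirror $V_1$-particle landing on $V_0$... wait, one must be careful: the correct statement is that the pair $\{$that offspring in $\beta^0$, that offspring in $\beta^1\}$ together with their mirrors forms a configuration symmetric about $V_{1/2}$, and I move the symmetric pair into the common part $\sigma$. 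The marginal law is correct in each process because, conditionally on the current positions, the offspring mechanism is translation/reflection invariant and I have only \emph{relabeled} which summand a particle belongs to, never changed its dynamics.

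The properties to verify at time $t+1$ are then: (\ref{eq12}) holds because $\sigma_{t+1}^0$ and $\sigma_{t+1}^1$ are built from identical ingredients (the old $\sigma$-particles coupled identically, plus symmetric pairs of ``crossing'' offspring that coincide after mirroring); (\ref{eq14}) holds because $\alpha_{t+1}^0$ only ever receives offspring that landed in $W_{1/2}$, by the transfer rule, and symmetrically for $\alpha_{t+1}^1$; and (\ref{eq13}) holds because the $\alpha^0$-offspring and $\alpha^1$-offspring were generated as exact mirror images of each other and the transfer-to-$\sigma$ rule was applied symmetrically. One should also check the marginals $\beta^0=\sigma^0+\alpha^0$ and $\beta^1=\sigma^1+\alpha^1$ have the right law, which follows since, process by process, every particle reproduces with an independent $\calP$-many offspring placed uniformly among the $2d+1$ sites. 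The main obstacle — the part requiring genuine care rather than bookkeeping — is precisely the definition of the transfer rule at the axis: one must arrange that when an $\alpha^0$-particle on $V_0$ sends an offspring to $V_1$, the move is coupled with the mirror-image $\alpha^1$-particle on $V_1$ sending an offspring to $V_0$, so that after moving the two resulting offspring (and their whole future progeny) are genuinely identical once we identify them via $\phi$; getting this pairing consistent for \emph{all} particles simultaneously, while keeping each process's offspring choices i.i.d.\ uniform and independent across particles, is the delicate point, and it is exactly where the extra ``stay-put'' option is needed so that the remaining (non-crossing) offspring of a $V_0$-particle have somewhere in $W_{1/2}$ to go.
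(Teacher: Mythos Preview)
Your overall architecture matches the paper's, but the interface rule you propose does not deliver (\ref{eq12}). Under the mirror coupling you describe, an $\alpha^0$-particle at $(0,y)$ sending an offspring East to $(1,y)$ is paired with its mirror $\alpha^1$-particle at $(1,y)$ sending an offspring West to $(0,y)$; transferring this pair to $\sigma$ puts a particle at $(1,y)$ into $\sigma^0$ and one at $(0,y)$ into $\sigma^1$. These are mirror images, not equal, so $\sigma_{t+1}^0 \neq \sigma_{t+1}^1$. Your phrase ``coincide after mirroring'' is exactly the $\Phi$-relation already carried by $\alpha$, not the literal equality that (\ref{eq12}) demands, and you cannot insert the missing copies without spoiling the marginals of $\beta^0$ and $\beta^1$.

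The paper's remedy is to abandon the mirror coupling \emph{on the boundary columns} $V_0,V_1$ and use a different bijection of the five displacements that lands matched offspring at the \emph{same} site: pair the $\alpha^0$-offspring going East (to $(1,y)$) with the $\alpha^1$-offspring that \emph{stays} (at $(1,y)$), and pair the $\alpha^0$-offspring that \emph{stays} (at $(0,y)$) with the $\alpha^1$-offspring going West (to $(0,y)$); these two pairs are relabelled $\sigma$ and now contribute identically to $\sigma^0$ and $\sigma^1$. The remaining three pairs (North--North, South--South, and $\alpha^0$-West with $\alpha^1$-East) are genuine mirror pairs that stay in $\alpha$ on the correct sides of $V_{1/2}$. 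This also corrects your account of why the stay-put option is essential: even without it, an $\alpha^0$-particle on $V_0$ has three neighbours in $W_{1/2}$, so non-crossing offspring always have room there; the true obstruction is that without staying put the $\alpha^1$-particle at $(1,y)$ has \emph{no} offspring at $(1,y)$ with which to pair the crossing $\alpha^0$-offspring so that both $\sigma$-contributions land at the same point.
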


We first show that Lemma \ref{deco1} suffices to complete the proof, which is, that (\ref{desbrw}) follows from Lemma \ref{deco1}. We then give the remaining proof of Lemma \ref{deco1}. 

We let $S_{n}^{i} = \inf\{ t\colon \sigma_{t}^{i}(y) \geq 1, \mbox{for some } y \in \partial \Lambda_{n}^{2}\}$, and, furthermore, we let  $U_{n}^{i} =  \inf\{ t\colon \alpha_{t}^{i}(y) \geq 1, \mbox{for some } y \in (H_{n} \cup H_{-n})\cap \Lambda_{n}^{2} \}$, $i =0,1$. 
In addition, we let $T_{n}^{0} =  \inf\{ t\colon \alpha_{t}^{0}(y) \geq 1, \mbox{for some } y \in  V_{-n} \cap \Lambda_{n}^{2} \},$ and, furthermore, we let  $T_{n}^{1} =  \inf\{ t\colon \alpha_{t}^{1}(y) \geq 1, \mbox{for some } y \in  V_{n} \cap \Lambda_{n}^{2} \}$. Note that by (\ref{eq14}), we have that 
\begin{equation}\label{tau20}
\tau_{n}^{i}   = \min(S_{n} ^{i}, T_{n}^{i}, U_{n}^{i}), \mbox{ }  i =0,1. 
\end{equation}
By (\ref{eq12}) we have that $S_{n} ^{0} = S_{n} ^{1}$ and, by (\ref{eq13}), we have that $U_{n}^{0}  = U_{n}^{1}$ and, by (\ref{note0}) that $T_{n}^{1} \leq T_{n}^{0}$. Thus, by (\ref{tau20}), we have that $\tau_{n}^{1} \leq  \tau_{n}^{0}$, a.s., which implies (\ref{desbrw}).

\begin{proof}[Proof of Lemma \ref{deco1}.]
Introducing the following terminology is required in order to give the construction. Particles in $(\sigma_{t}^{0})$ will be called $\sigma^{0}$ particles, particles in $(\sigma_{t}^{1})$ will be called $\sigma^{1}$ particles, and, similarly for particles in $(\alpha_{t}^{0})$ and in $(\alpha_{t}^{1})$. If $\sigma^{0}, \sigma^{1}$ is a pair of particles positioned at the same site, we then say that their offspring is distributed (at the next time instant) in an \textit{identical} way whenever the number of offspring produced by both particles is matched (i.e.\ it is defined by means of the same r.v.) and, furthermore, the displacement of their progeny is determined identically (i.e.\ by means of the same r.v.'s).  Further, if $\alpha^{0}, \alpha^{1}$ is a pair of particles positioned at some site on $(x,y) \in V_{0}$ and on $\phi(x,y) \in V_{1}$ respectively, we then say that offspring of $\alpha^{0}$ and $\alpha^{1}$ is distributed in a \textit{synthetic} way whenever the number of offspring produced by both particles is matched and, furthermore:

\begin{enumerate}[label=(\roman*)]
\item offspring of $\alpha^{0}$ placed at $V_{1}$ are \textbf{relabelled} as $\sigma^{0}$  particles and match offspring of $\alpha^{1}$ placed at $V_{1}$ which is  \textbf{relabelled} as $\sigma^{1}$ particles,
\item offspring of $\alpha^{0}$ placed at $V_{0}$  is  \textbf{relabelled} as $\sigma^{0}$ particles and match offspring of $\alpha^{1}$ placed at $V_{0}$ which is  \textbf{relabelled} as  $\sigma^{1}$ particles,
\item offspring of $\alpha^{0}$ placed at $V_{-1}$ matches offspring of $\alpha^{1}$ placed at $V_{2}$. 
\end{enumerate}
\noindent Further, if $\alpha^{0}$  and $\alpha^{1}$ is a pair of particles at site $(x,y) \in W_{1/2} \backslash V_{0}$  and at site $\phi(x,y) \in E_{1/2} \backslash V_{1}$ respectively, we say that offspring of  $\alpha^{0}$ and  $\alpha^{1}$ are distributed in an \textit{antithetic} way whenever the number of offspring of both particles are matched, and, furthermore:
\begin{enumerate}[label=(\roman**)]
\item offspring of $\alpha^{0}$ placed at the North and South neighbor of $(x,y)$ is matched with offspring of $\alpha^{1}$ placed at the North and South neighbor of $\phi(x,y)$ respectively, whereas, 
\item offspring of $\alpha^{0}$ placed at the site West (resp.\ East) of $(x,y)$ is matched with offspring of  $\alpha^{1}$ placed at the site East  (resp.\ West) of $\phi(x,y)$, 
\end{enumerate}
where offspring of  $\alpha^{0}$  and $\alpha^{1}$ particles will also be $\alpha^{0}$ and  $\alpha^{1}$ particles respectively (i.e.\ no relabelling occurs). 

Note that by the definition of the processes (\ref{eq11}),  (\ref{eq12}),   (\ref{eq14}), (\ref{eq13}) hold for $t=1$.  We will assume that (\ref{eq11}),  (\ref{eq12}),  (\ref{eq14}) and (\ref{eq13}) hold for some $t$ and show that they hold for $t+1$.  The evolution of $\sigma_{t}^{0}$, $\sigma_{t}^{1}$, $\alpha_{t}^{0}$, $\alpha_{t}^{1}$ at time $t+1$ may then be described as follows. 
\begin{enumerate}[label=(\alph*)]
\item  Offspring of $\sigma^{0}$ particles is distributed at time $t+1$ in an identical way to $\sigma^{1}$ particles. 
\item Offspring of $\alpha^{0}$ particles at $(0,y) \in V_{0}$ is distributed in a synthetic way to  $\alpha^{1}$ particles at $\phi(1,y) \in V_{1}$.
\item Offspring of $\alpha^{0}$ particles at $(x,y) \in W_{1/2} \backslash V_{0}$ is distributed in an antithetic way to $\alpha^{1}$ particles at $\phi(x,y) \in E_{1/2} \backslash V_{1}$. 
\end{enumerate}
Note that every $\sigma^{0}$ and $\alpha^{0}$ (resp.\ $\sigma^{1}$ and $\alpha^{1}$) particle generates independently of other $\sigma^{0}$ and $\alpha^{0}$ (resp.\ $\sigma^{1}$ and $\alpha^{1}$) particles a random number of offspring distributed according to $\calP$ and dies after one time unit. Note further that each offspring particle moves at a position chosen uniformly at random from the $5$ sites located at most at distance $1$ from the location of the parent. We thus have that the stochastic comparison described preserves the correct marginal distributions for both $(\beta_{t}^{0})$ and $(\beta_{t}^{1})$, and further, that we have
\begin{equation}\label{eqbetend0a}
\beta_{t+1}^{0} = \sigma_{t+1}^{0}  + \alpha_{t+1}^{0} \hspace{2mm}\mbox { and }\hspace{2mm} \beta_{t+1}^{1} = \sigma_{t+1}^{1}  + \alpha_{t+1}^{1}.  
\end{equation} 

Let $\tilde{\sigma}^{0}_{t+1}$ (resp.\  $\tilde{\sigma}^{1}_{t+1}$) be offspring at time $t+1$ of particles in  $\sigma^{0}_{t}$ (resp.\  $\tilde{\sigma}^{1}_{t}$). Let also $\bar{\sigma}^{0}_{t+1}$ be offspring of $\alpha^{0}$ particles that are located at time $t$ at $V_{0}$ and which are placed at $V_{0}$ and at $V_{1}$ at time $t+1$. Let in addition $\bar{\sigma}^{1}_{t+1}$  be offspring of $\alpha^{1}$ particles that are located at time $t$ at $V_{1}$ and which are placed at $V_{0}$ and at $V_{1}$ at time $t+1$.  Note that 
\begin{equation}\label{sigma01}
\sigma_{t+1}^{i} = \tilde{\sigma}^{i}_{t+1}+  \bar{\sigma}^{i}_{t+1}  \mbox{ for } i = 0,1. 
\end{equation}
By  (\ref{eq12}), we have that  (a) gives that 
\begin{equation}\label{sigmas11}
\tilde{\sigma}^{1}_{t+1} = \tilde{\sigma}^{0}_{t+1}. 
\end{equation}
Note that, by (\ref{eq13}) we have that $\Phi(\alpha_{t}^{0}(0,y))= \alpha_{t}^{1}(1,y)$ and, hence, by  (b), (i), and (ii), we have that 
\begin{equation}\label{sigmas21}
\bar{\sigma}^{1}_{t+1} = \bar{\sigma}^{0}_{t+1}. 
\end{equation}
Plugging (\ref{sigmas11}) and (\ref{sigmas21}) into (\ref{sigma01}) gives that
\begin{equation}\label{sigt0a}
\sigma_{t+1}^{0} = \sigma_{t+1}^{1}.  
\end{equation}  

Note further that from (\ref{eq14}), (b) together with (i) and (ii), give that  
\begin{equation}\label{a00a}
\alpha_{t+1}^{0}(x,y) = 0, \forall (x,y) \in E_{1/2}, \mbox{ and } \alpha_{t+1}^{1}(x,y) = 0,  \forall (x,y) \in W_{1/2}. 
\end{equation}
Note in addition that from (\ref{eq13}), (c) by (i*), (ii*), and (iii), give that
\begin{equation}\label{phi00a}
\Phi(\alpha_{t+1}^{0}(x,y)) = \alpha_{t+1}^{1}(x,y). 
\end{equation}

Hence, by (\ref{eqbetend0a}), (\ref{sigt0a}), (\ref{a00a}) and (\ref{phi00a}) we have that (\ref{eq11}),  (\ref{eq12}),  (\ref{eq14}) and (\ref{eq13}) respectively hold for $t+1$, and the proof is complete. 
\end{proof}
\end{proof}

\begin{proof}[Proof of Theorem \ref{thmbr2}.] We rely on the two following statements, the proofs of which are deferred to immediately below next. Note that  Theorem \ref{thmbr2} for $n=1$ is trivially true and, thus, we may henceforth assume that $n \geq 2$.  
\begin{proposition}\label{prop1}  
We have that, for all  $n \geq 2$,  
\begin{equation}\label{prop1preq2}
\tau_{n}^{(0,0)} \leq_{st.}  \tau_{n}^{(0,2)},
\end{equation}
and, in particular,
\begin{equation}\label{prop1preq12}
p_{n}(0,0) \leq p_{n}(0,2). 
\end{equation}
\end{proposition}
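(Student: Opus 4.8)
The plan is to construct a coupling of $(\delta_t^{(0,0)})$ and $(\delta_t^{(0,2)})$ under which $\tau_n^{(0,2)}\le\tau_n^{(0,0)}$ holds almost surely; this is the stochastic comparison between $\tau_n^{(0,0)}$ and $\tau_n^{(0,2)}$, and a fortiori $p_n(0,0)=\pr(\tau_n^{(0,0)}<\infty)\le\pr(\tau_n^{(0,2)}<\infty)=p_n(0,2)$, since $\{\tau_n^{(0,0)}<\infty\}\subseteq\{\tau_n^{(0,2)}<\infty\}$. As in the proof of Theorem~\ref{thmbr} I treat the planar case only. The coupling is a reflection coupling across the horizontal line $H_1=\{(x,y)\in\Z^2:y=1\}$, the symmetry axis carrying $(0,0)$ to $(0,2)$; write $\phi(x,y)=(x,2-y)$ for the reflection across $H_1$, so that $\phi$ fixes $H_1$ pointwise and interchanges the lower half-plane $\{(x,y):y\le0\}$ with the upper half-plane $\{(x,y):y\ge2\}$, and write $\Phi(f)(x,y)=f(\phi(x,y))$ as before. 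The case $n=2$ is degenerate, as then $(0,2)\in\partial\Lambda_2^2$, so $\tau_2^{(0,2)}=0$ and $p_2(0,2)=1$; assume $n\ge3$.

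The main step is to build the coupling through a decomposition $\delta_t^{(0,0)}=\sigma_t^0+\alpha_t^0$ and $\delta_t^{(0,2)}=\sigma_t^1+\alpha_t^1$ in the spirit of Lemma~\ref{deco1}, with the properties: $\sigma_t^0=\sigma_t^1$ (a common sub-population evolved with identical randomness in both processes); $\alpha_t^0$ is supported in $\{y\le0\}$ and $\alpha_t^1$ in $\{y\ge2\}$; $\Phi(\alpha_t^0)=\alpha_t^1$; and $\alpha_0^0$ is a single particle at $(0,0)$, $\alpha_0^1$ a single particle at $(0,2)=\phi(0,0)$, with $\sigma_0^0=\sigma_0^1\equiv0$. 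The (simpler than in Lemma~\ref{deco1}) dynamics: a $\sigma$ particle begets only $\sigma$ particles, with the same offspring count and displacements in both processes; a pair consisting of an $\alpha^0$ particle at a site $(x,y)$ with $y\le0$ and its mirror $\alpha^1$ particle at $\phi(x,y)$ is given a matched offspring count, and each pair of matched offspring is coupled after a vertical reflection of the displacement (a North step of the $\alpha^0$ offspring goes with a South step of the $\alpha^1$ offspring, and East, resp.\ West, with East, resp.\ West). The only relabelling is at the axis: the North step of an $\alpha^0$ particle sitting on $\{y=0\}$ lands on $H_1$, and the matched offspring of its partner --- a South step of an $\alpha^1$ particle on $\{y=2\}$ --- lands on the same site of $H_1$; both of these offspring are relabelled $\sigma$ particles. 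One checks by induction on $t$ that every offspring move has the marginal law of the periodic nearest-neighbours branching random walk, so the two marginals are correct, and that the four displayed invariants propagate.

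Granting the coupling, the conclusion is quick. Put $S_n=\inf\{t:\sigma_t^0(z)\ge1\text{ for some }z\in\partial\Lambda_n^2\}$ (well defined since $\sigma^0=\sigma^1$) and $A_n^i=\inf\{t:\alpha_t^i(z)\ge1\text{ for some }z\in\partial\Lambda_n^2\}$ for $i=0,1$; by the support properties $\tau_n^{(0,0)}=\min(S_n,A_n^0)$ and $\tau_n^{(0,2)}=\min(S_n,A_n^1)$, so it suffices to prove $A_n^1\le A_n^0$, which is clear if $A_n^0=\infty$. If $A_n^0<\infty$, then at time $t=A_n^0$ some $\alpha^0$ particle occupies a site $z\in\partial\Lambda_n^2$ with second coordinate $\le0$, so by $\Phi(\alpha_t^0)=\alpha_t^1$ some $\alpha^1$ particle occupies $\phi(z)$, whose second coordinate is $\ge2$. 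A one-line check on $\|\cdot\|$ shows $\phi(z)\notin\mathring{\Lambda}_n^2$: if $\phi(z)\in\partial\Lambda_n^2$ then $A_n^1\le t$ at once; if $\phi(z)\notin\Lambda_n^2$ then, since $\alpha^1$ starts inside $\Lambda_n^2$ and moves by nearest-neighbour steps while every ancestor of an $\alpha^1$ particle is an $\alpha^1$ particle (a $\sigma$ or an $\alpha^0$ particle never begets an $\alpha^1$ particle), the lineage of this $\alpha^1$ particle must have visited $\partial\Lambda_n^2$ at an earlier time, so again $A_n^1\le t$. Hence $\tau_n^{(0,2)}\le\tau_n^{(0,0)}$ almost surely.

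The real work is the induction of the second paragraph, and its delicate point is exactly the consistency of the relabelling: because $H_1$ is a genuine line of lattice sites --- which we are forced to use, $(0,0)$ and $(0,2)$ lying two apart on the same sublattice --- an $\alpha^0$ particle crossing it upward from $\{y=0\}$ and its mirror $\alpha^1$ particle crossing it downward from $\{y=2\}$ land on one and the same site of $H_1$, so relabelling both as $\sigma$ particles keeps $\sigma_t^0=\sigma_t^1$. This is where the periodic ($2d$-neighbour) structure enters and where the argument departs from Theorem~\ref{thmbr}: there the starting points are at distance $1$, the axis must be the half-integer line $V_{1/2}$, the two crossing offspring would land at distinct sites, and one needs the "stay put" move to repair the decomposition --- which is presumably also why the distance-$1$ comparison underlying Theorem~\ref{thmbr2} is obtained through the contradiction argument rather than a direct coupling.
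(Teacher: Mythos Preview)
Your proof is correct and follows essentially the same reflection-coupling strategy as the paper: decompose each process into a common $\sigma$-part and mirror $\alpha$-parts across the midline, relabel $\alpha$-offspring that land on the axis as $\sigma$-particles, and compare hitting times. The paper reflects across $V_1=\{x=1\}$ (comparing $(0,0)$ with $(2,0)$, equivalent by symmetry) rather than your $H_1$, and it states the decomposition (its Lemma~\ref{gdeco1}) for a slightly more general process with a per-particle survival probability $\pi$, so that the same lemma can be reused in the proof of Corollary~\ref{cor1}.

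The one organisational difference worth noting is how the $\alpha$-hitting times are compared. The paper splits $\partial\Lambda_n^2$ into the two sides parallel to the axis (handled by a $U_n$ term, equal for both by the mirror property) and the two sides perpendicular to it (handled by a $T_n$ term), and uses the explicit observation $\psi(-n+2,y)=(n,y)$ to conclude $T_n^{(2,0)}\le T_n^{(0,0)}$ directly: the $\alpha^{(0,0)}$ lineage must pass through $V_{-n+2}$ before reaching $V_{-n}$, and at that moment its mirror is already on $V_n\subset\partial\Lambda_n^2$. Your single $A_n$ with the lineage-tracking argument (``if $\phi(z)\notin\Lambda_n^2$ then an ancestor hit $\partial\Lambda_n^2$ earlier'') reaches the same conclusion and is equally valid; the paper's bookkeeping just avoids the case split on whether $\phi(z)$ lies on or outside the boundary.
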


\begin{proposition}\label{prop2}
We have that, for all  $n \geq 2$, 
\begin{equation}\label{preq0}
\tau_{n}^{(0,0)} \leq_{st.}  \tau_{n}^{(1,1)},
\end{equation}
and, in particular, 
\begin{equation}\label{preq}
p_{n}(0,0) \leq p_{n}(1,1). 
\end{equation}
\end{proposition}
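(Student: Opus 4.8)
The plan is to prove Proposition~\ref{prop2} by the periodic analogue of the reflection coupling of Lemma~\ref{deco1}, carried out across the anti-diagonal in place of the vertical axis $V_{1/2}$. The reflection $\psi\colon(x,y)\mapsto(1-y,1-x)$ across the line $A=\{(x,y)\colon x+y=1\}$ swaps $(0,0)$ and $(1,1)$, fixes $A$ pointwise, and is a lattice isometry permuting the four admissible displacements of the periodic walk: it interchanges the North and West steps and the South and East steps. I would accordingly couple $(\delta^{(0,0)}_t)$ and $(\delta^{(1,1)}_t)$ on a common probability space so as to force $\tau_n^{(1,1)}\le\tau_n^{(0,0)}$ almost surely; equivalently $\tau_n^{(1,1)}\le_{st.}\tau_n^{(0,0)}$, which is the orientation of (\ref{preq0}) that delivers the stated escape inequality (\ref{preq}), $p_n(0,0)\le p_n(1,1)$, upon letting $t\to\infty$ in $\pr(\tau_n^{(1,1)}>t)\le\pr(\tau_n^{(0,0)}>t)$.

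First I would introduce the decomposition $\delta^{(0,0)}_t=\sigma^0_t+\alpha^0_t$ and $\delta^{(1,1)}_t=\sigma^1_t+\alpha^1_t$, with $\sigma^0_0=\sigma^1_0=0$, $\alpha^0_0=\mathbf{1}_{(0,0)}$ and $\alpha^1_0=\mathbf{1}_{(1,1)}$, maintaining for all $t$ the three invariants of Lemma~\ref{deco1}, now relative to $A$: namely $\sigma^0_t=\sigma^1_t$; the supports satisfy $\alpha^0_t\equiv0$ on $\{x+y\ge1\}$ and $\alpha^1_t\equiv0$ on $\{x+y\le1\}$; and $\Phi(\alpha^0_t)=\alpha^1_t$, where $\Phi(f)=f\circ\psi$. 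Thus $\alpha^0_t$ lives on the near half-plane $\{x+y\le0\}$, $\alpha^1_t$ on the far half-plane $\{x+y\ge2\}$, and the two are exact mirror images across $A$, while the shared mass $\sigma^0_t=\sigma^1_t$ carries the symmetric part of both walks.

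The inductive step, and the main obstacle, is to specify a reflected-move relabelling that preserves both the marginal law of each copy and these three invariants, without recourse to the stay-in-place step that rule (ii) of Lemma~\ref{deco1} exploited. I would couple any mirror pair, $\alpha^0$ at $(x,y)$ and $\alpha^1$ at $\psi(x,y)$, by matching their offspring counts and making their displacements $\psi$-conjugate. The crux is the behaviour at $A$: for the pair at $(0,0)$ and $(1,1)$, the North and East offspring of $\alpha^0$ land on $A$ at $(0,1)$ and $(1,0)$, while the West and South offspring of $\alpha^1$ land on $A$ at the very same two sites (since $\psi$ fixes $A$ and swaps N$\leftrightarrow$W, S$\leftrightarrow$E); these axis offspring are relabelled as $\sigma^0$, resp.\ $\sigma^1$, particles and matched in pairs, so that $\sigma^0_{t+1}=\sigma^1_{t+1}$ is retained, whereas the remaining South and West offspring of $\alpha^0$ stay on $\{x+y\le0\}$ and pair, as mirror images, with the North and East offspring of $\alpha^1$ on $\{x+y\ge2\}$, retaining $\Phi(\alpha^0_{t+1})=\alpha^1_{t+1}$ and the support conditions. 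Verifying that this correspondence is a genuine coupling---each offspring still moving uniformly over its four neighbours---for every near-axis pair, and that it propagates all three invariants, is the part that I expect to require the most care.

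Finally I would read off the comparison as after Lemma~\ref{deco1}. Let $S^i_n$ be the time $\sigma^i$ first meets $\partial\Lambda_n^2$; then $S^0_n=S^1_n$. Being confined to $\{x+y\le0\}$, $\alpha^0$ can meet $\partial\Lambda_n^2$ only along the South and West walls, while $\alpha^1$, confined to $\{x+y\ge2\}$, can meet it only along the North and East walls. Since $\psi$ carries the North wall $\{y=n\}$ onto the column $\{x=1-n\}$ and the East wall $\{x=n\}$ onto the row $\{y=1-n\}$---each one step interior to the West, resp.\ South, wall---the identity $\Phi(\alpha^0_t)=\alpha^1_t$ shows that $\alpha^1$ reaches the North (resp.\ East) wall exactly when $\alpha^0$ reaches $\{x=1-n\}$ (resp.\ $\{y=1-n\}$), hence no later than $\alpha^0$ reaches the West (resp.\ South) wall, which at its first passage it cannot do without first occupying the adjacent interior column (resp.\ row). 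Therefore the boundary time of $\alpha^1$ is at most that of $\alpha^0$, and combining this with $S^0_n=S^1_n$ through $\tau_n^i=\min(S^i_n,\,\text{boundary time of }\alpha^i)$ gives $\tau_n^{(1,1)}\le\tau_n^{(0,0)}$ almost surely, whence $p_n(0,0)\le p_n(1,1)$.
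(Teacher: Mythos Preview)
Your proposal is correct and follows essentially the same route as the paper: both arguments couple $(\delta^{(0,0)}_t)$ and $(\delta^{(1,1)}_t)$ via the anti-diagonal reflection across $\{x+y=1\}$, decompose each process into a shared part $\sigma$ and a mirrored part $\alpha$, relabel $\alpha$-offspring landing on the axis as $\sigma$-particles (which land at identical sites because $\psi$ fixes the axis and permutes the four periodic steps as N$\leftrightarrow$W, S$\leftrightarrow$E), and then compare boundary-hitting times using that $\psi$ sends the North/East walls of $\Lambda_n^2$ to the column $x=1-n$ and row $y=1-n$, each strictly interior to the West/South walls. The paper packages this as Lemma~\ref{decodia} and introduces the auxiliary box $\tilde{\Lambda}_n^2$ (centred at $(1/2,1/2)$, corners removed) to phrase the boundary geometry, whereas you argue directly that a nearest-neighbour $\alpha^0$ must occupy $\{x=1-n\}$ before $\{x=-n\}$; these are equivalent. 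You also rightly note that the coupling yields $\tau_n^{(1,1)}\le\tau_n^{(0,0)}$ a.s., which is the orientation needed for $p_n(0,0)\le p_n(1,1)$.
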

We may then derive (\ref{thm2des}) as follows. Let $q_{n}(x) = 1- p_{n}(x)$, $x \in \mathring{\Lambda}_{n}^{2}$.   Observe that by the Markov property, independence of offspring of the particles generated at $t=1$ and, symmetry, we have that 
\begin{equation}\label{abs00}
q_{n}(0,0) = \sum_{i\geq1} q_{n}^{i}(1,0)P_{i}. 
\end{equation} 
Let $X_{1} = \delta_{1}^{(1,0)}(1,1), X_{2}= \delta_{1}^{(1,0)}(2,0), X_{3}= \delta_{1}^{(1,0)}(1,-1), X_{4}=\delta_{1}^{(1,0)}(0,0)$ and let $C = X_{1} + X_{2} +X_{3} + X_{4}$, where $C \sim \calP$, and let $f_{i}( x_{1}, x_{2}, x_{3}, x_{4}) = \pr((X_{1}, X_{2}, X_{3}, X_{4}) =( x_{1}, x_{2}, x_{3}, x_{4})|C=i)$, $i \geq 1$, $ x_{1} +x_{2} + x_{3}+x_{4} = i$.   By the Markov property at time $1$ and symmetry, we have that 
\begin{eqnarray}\label{cconi}
\pr(\tau_{n}(1,0) = \infty | C = i) & = & \sum_{x_{1}+x_{2}+x_{3}+x_{4} = i}f_{i}( x_{1}, x_{2}, x_{3}, x_{4})  q_{n}^{x_{1} +x_{3}}(1,1)q_{n}^{x_{2}}(2,0) q_{n}^{x_{4}}(0,0)\nonumber \\
& \leq & \sum_{x_{1}+x_{2}+x_{3}+x_{4} = i} f_{i}( x_{1}, x_{2}, x_{3}, x_{4}) q_{n}^{i}(0,0)  \nonumber\\
& \leq & q_{n}^{i}(0,0), 
\end{eqnarray}
$i \geq 1$, where in the second line we invoked (\ref{prop1preq12}) and (\ref{preq}). Thus, by (\ref{cconi}), we have that 
\begin{equation}\label{eq10}
q_{n}(1,0) \leq \sum_{i \geq 1}q_{n} ^{i}(0,0) P_{i}. 
\end{equation}
Let us assume to the end of reaching a contradiction that 
\begin{equation}\label{abs1}
q_{n}(0,0) < q_{n}(1,0).  
\end{equation}
Then, by (\ref{abs00}) and (\ref{eq10}), we have that  
\[
\sum_{i\geq1} (q_{n}^{i}(1,0) - q_{n} ^{i}(0,0))P_{i} <  0, 
\]
and we have thus reached a contradiction of (\ref{abs1}) and, hence, we conclude that  $q_{n}(0,0) \geq q_{n}(1,0)$.

\begin{proof}[Proof of Proposition \ref{prop2}.]

Since (\ref{preq}) follows from  (\ref{preq0}) it suffices that we show only the latter. We let $(\beta_{t}^{(0,0)})_{t \geq 0}$ and $(\beta_{t}^{(1,1)})_{t \geq 0}$ be the processes started by a single particle at $(0, 0)$ and at $(1, 1)$ respectively. We show a stochastic comparison among $(\beta_{t}^{(0,0)})_{t \geq 0}$ and $(\beta_{t}^{(1,1)})_{t \geq 0}$ such that  
\begin{equation}\label{desbrw2}
\tau_{n}^{(0,0)} \leq \tau_{n}^{(1,1)}, \mbox{ a.s..} 
\end{equation}
To the end of presenting our stochastic comparison, the following definitions are required. We let $\tilde{\Lambda}^{2}_{n}$ be the box of side length $2n-1$ centred at $(1/2,1/2)$ with its 4 end vertices, $(-n+1, -n+1), (-n+1, n), (n,n), (n,-n+1)$, removed. Further,  we let 
\[
D_{k} = \{ (x,y) \in \Z^{2} \colon y = -x+k\}, k \in \Z, 
\] 
and, we let 
\[
\tilde{D}_{k} = D_{k} \cap \tilde{\Lambda}^{2}_{n}, \mbox{ for } k = -2n+2, -2n+1, \dots, 2n-1, 2n.
\]
where $\tilde{D}_{1}$ will be our symmetry axis in the argument below, since it divides $\tilde{\Lambda}^{2}_{n}$ into two, as follows. Let 
\[
SW_{1/2} = \tilde{D}_{0} \cup \tilde{D}_{-1} \dots \cup \tilde{D}_{-2n+2} \mbox{ and } NE_{1/2} = \tilde{D}_{2} \cup \tilde{D}_{3} \dots \cup \tilde{D}_{2n}. 
\]
We let $\upsilon$ be the bijection of every point in $SW_{1/2}$ to its mirror image point (with respect to $\tilde{D}_{1}$) in $NE_{1/2}$. We in addition let 
\[
\Upsilon(f(x,y)) = f(\upsilon(x,y))
\]
$f \colon \tilde{\Lambda}_{n}^{2} \rightarrow \{0,1,\dots\}$.  We note that $\upsilon$ maps points on $\tilde{D}_{0}, \tilde{D}_{-1}, \dots, \tilde{D}_{-2n+2}$ to points on $\tilde{D}_{2} , \tilde{D}_{3}, \dots, \tilde{D}_{2n}$ respectively and, furthermore, that, if we let $\partial_{i} \tilde{\Lambda}_{n}^{2}, i = N, S, E,W$ be the set of points comprising the North, South, East and West sides of $\partial\tilde{\Lambda}_{n}^{2}$ respectively, we then have that 
\begin{equation}\label{note2}
\upsilon(\partial_{W} \tilde{\Lambda}_{n}^{2}) = \partial_{N} \tilde{\Lambda}_{n}^{2}, \mbox{ } \upsilon(\partial_{S} \tilde{\Lambda}_{n}^{2}) = \partial_{E} \tilde{\Lambda}_{n}^{2}. 
\end{equation}
and, that
\begin{equation}\label{timesint}
\partial_{W}\tilde{\Lambda}_{n}^{2} \in \mathring{\Lambda}_{n}^{2} \mbox{ and } \partial_{S}\tilde{\Lambda}_{n}^{2} \in \mathring{\Lambda}_{n}^{2}, 
\end{equation}
and, that
\begin{equation}\label{timesint2}
\partial_{N}\tilde{\Lambda}_{n}^{2} \subset \partial_{N}\Lambda_{n}^{2} \mbox{ and } \partial_{E}\tilde{\Lambda}_{n}^{2} \subset \partial_{E}\Lambda_{n}^{2}, 
\end{equation}
where $\partial_{i}\Lambda_{n}^{2}$, $i=N,E$, are the set of the points at the North and the East sides of $\Lambda_{n}^{2}$ respectively.

We now give a stochastic comparison between $(\beta_{t}^{(0,0)})$ and $(\beta_{t}^{(1,1)}))$ that allows us to decompose each of them into two processes which are related amongst them. 

\begin{lemma}\label{decodia}
We have that 
\begin{equation}\label{eq011}
\beta_{t}^{(0,0)} = \sigma_{t}^{(0,0)}  + \alpha_{t}^{(0,0)} \hspace{2mm}\mbox { and }\hspace{2mm} \beta_{t}^{(1,1)} = \sigma_{t}^{(1,1)}  + \alpha_{t}^{(1,1)}, 
\end{equation}
where we have that the following properties regarding the auxiliary processes $( \sigma_{t}^{(0,0)})$, $(\alpha_{t}^{(0,0)})$, $( \sigma_{t}^{(1,1)})$, $(\alpha_{t}^{(1,1)})$ hold for all $t\geq0$,  
\begin{equation}\label{eq012}
\sigma_{t}^{(0,0)} = \sigma_{t}^{(1,1)},  
\end{equation}
\begin{equation}\label{eq014}
\alpha_{t}^{(0,0)}(x,y) = 0, \forall (x,y) \in NE_{1/2}, \mbox{ and } \alpha_{t}^{(1,1)}(x,y) = 0, \forall (x,y) \in SW_{1/2},
\end{equation}
\begin{equation}\label{eq013}
\Upsilon(\alpha_{t}^{(0,0)}) = \alpha_{t}^{(1,1)}, 
\end{equation}
where we let $\alpha_{0}^{(0,0)}(0,0)  = 1$ and, $\alpha_{0}^{(0,0)}(x,y)  = 0$, otherwise; and further, we let $\alpha_{0}^{(1,1)}(1,1) = 1$ and, $\alpha_{0}^{(1,1)}(x,y)  = 0$, otherwise; and we also let $\sigma_{0}^{(0,0)}(x,y) = \sigma_{0}^{(1,1)}(x,y) = 0$, $\forall (x,y)$.
\end{lemma}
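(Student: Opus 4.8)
The plan is to prove Lemma~\ref{decodia} by transcribing the coupled construction behind Lemma~\ref{deco1}, with the anti-diagonal $\tilde{D}_{1}$ now playing the role of the axis $V_{1/2}$, the reflection $\upsilon$ that of $\phi$, the operator $\Upsilon$ that of $\Phi$, and $SW_{1/2},NE_{1/2}$ those of $W_{1/2},E_{1/2}$. Thus I would construct $(\sigma_{t}^{(0,0)}),(\alpha_{t}^{(0,0)}),(\sigma_{t}^{(1,1)}),(\alpha_{t}^{(1,1)})$ on a common probability space by induction on $t$, from the prescribed configurations at $t=0$ (where \eqref{eq011}--\eqref{eq013} are immediate), so that at every step each $\sigma^{(\cdot)}$- and $\alpha^{(\cdot)}$-particle emits an independent $\calP$-distributed number of offspring, each placed uniformly among the $5$ sites at distance at most $1$ from it --- which keeps the marginal laws of $\beta_{t}^{(0,0)}=\sigma_{t}^{(0,0)}+\alpha_{t}^{(0,0)}$ and $\beta_{t}^{(1,1)}=\sigma_{t}^{(1,1)}+\alpha_{t}^{(1,1)}$ correct --- while \eqref{eq012}--\eqref{eq013} are propagated from $t$ to $t+1$. (Here I read $\upsilon$ as the affine reflection $(a,b)\mapsto(1-b,1-a)$ of all of $\Z^{2}$, which fixes $\{x+y=1\}$ pointwise, carries $\tilde{D}_{k}$ onto $\tilde{D}_{2-k}$ and interchanges the half-planes containing $SW_{1/2}$ and $NE_{1/2}$, so the rules below make sense for all $t$; note also that $(0,0)$ and $(1,1)$ lie on even anti-diagonals and $\upsilon$ preserves this parity, so the two processes remain on the same pair of sublattices at every time.)

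The inductive step uses three reproduction rules for paired particles. (i) A pair of a $\sigma^{(0,0)}$- and a $\sigma^{(1,1)}$-particle at a common site reproduce \emph{identically} (matched offspring number and matched displacements), so $\sigma_{t+1}^{(0,0)}=\sigma_{t+1}^{(1,1)}$ gains the same new particles. Writing $\upsilon_{\ast}$ for the induced action of $\upsilon$ on the $5$ moves --- it fixes the lazy move and, by \eqref{note2}, swaps North with West and East with South --- the remaining rules are: (ii) a pair of an $\alpha^{(0,0)}$-particle at $z\in\tilde{D}_{0}$ and the $\alpha^{(1,1)}$-particle at $\upsilon(z)\in\tilde{D}_{2}$ reproduce \emph{synthetically}, i.e.\ their offspring numbers are matched and an offspring of the first moving by $\vec{d}$ is matched with an offspring of the second moving by $\upsilon_{\ast}(\vec{d})$, except that those offspring landing on the axis $\tilde{D}_{1}$ --- of $\alpha^{(0,0)}$ via a North or East move, and the matching ones of $\alpha^{(1,1)}$ via a West or South move --- are \textbf{relabelled} as $\sigma^{(0,0)}$- resp.\ $\sigma^{(1,1)}$-particles; since $\upsilon$ fixes $\tilde{D}_{1}$ pointwise and $z+\text{North}=\upsilon(z)+\text{West}$, $z+\text{East}=\upsilon(z)+\text{South}$ for $z\in\tilde{D}_{0}$, each relabelled pair lands at a common site of $\tilde{D}_{1}$, so \eqref{eq012} persists, while the other offspring (on $\tilde{D}_{0}$ resp.\ $\tilde{D}_{2}$, or on $\tilde{D}_{-1}$ resp.\ $\tilde{D}_{3}$) stay $\alpha$-particles and remain related by $\upsilon$; (iii) a pair of an $\alpha^{(0,0)}$-particle at $z\in SW_{1/2}\setminus\tilde{D}_{0}$ and the $\alpha^{(1,1)}$-particle at $\upsilon(z)\in NE_{1/2}\setminus\tilde{D}_{2}$ reproduce \emph{antithetically} (offspring numbers matched, $\vec{d}$ matched with $\upsilon_{\ast}(\vec{d})$, no relabelling): this is legitimate because such a pair cannot reach $\tilde{D}_{1}$ in one step, and it keeps the $\upsilon$-symmetry \eqref{eq013} and the separation \eqref{eq014} intact.

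Granting these rules, the verification mirrors that for Lemma~\ref{deco1}: one splits $\sigma_{t+1}^{(i)}$ into the offspring of $\sigma^{(i)}$-particles (rule (i)) and the offspring relabelled from the near-axis $\alpha$-pairs (rule (ii)), checks both parts agree for $i=(0,0)$ and $i=(1,1)$, and then checks, using \eqref{eq014}, \eqref{eq013}, (ii) and (iii), that $\alpha_{t+1}^{(0,0)}$ avoids $NE_{1/2}$, that $\alpha_{t+1}^{(1,1)}$ avoids $SW_{1/2}$, and that $\Upsilon(\alpha_{t+1}^{(0,0)})=\alpha_{t+1}^{(1,1)}$. The only geometric bookkeeping is that, by the placement of $\tilde{\Lambda}_{n}^{2}$ and the removal of its four corner vertices, $\mathring{\Lambda}_{n}^{2}\setminus\tilde{\Lambda}_{n}^{2}=\{(-n+1,-n+1)\}$, whose $\upsilon$-image $(n,n)$ lies on $\partial\Lambda_{n}^{2}$, while by \eqref{timesint}--\eqref{timesint2} one has $\partial_{W}\tilde{\Lambda}_{n}^{2},\partial_{S}\tilde{\Lambda}_{n}^{2}\subset\mathring{\Lambda}_{n}^{2}$ and $\partial_{N}\tilde{\Lambda}_{n}^{2},\partial_{E}\tilde{\Lambda}_{n}^{2}\subset\partial\Lambda_{n}^{2}$; these are exactly the facts from which, once Lemma~\ref{decodia} is available, one reads off the a.s.\ comparison of $\tau_{n}^{(0,0)}$ and $\tau_{n}^{(1,1)}$ underlying Proposition~\ref{prop2}.

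The step I expect to be the real obstacle is getting the \emph{synthetic} rule exactly right: unlike $V_{1/2}$ in Lemma~\ref{deco1}, the axis $\tilde{D}_{1}$ contains lattice sites, so offspring genuinely land on it and must be converted into $\sigma$-particles shared by both processes; one has to ensure simultaneously that matched $\alpha^{(0,0)}$- and $\alpha^{(1,1)}$-offspring always land at the same site of $\tilde{D}_{1}$ (preserving $\sigma_{t}^{(0,0)}=\sigma_{t}^{(1,1)}$) and that, relabelling aside, each parent still emits offspring in $5$ i.i.d.\ uniform directions --- i.e.\ that the joint law of matched move-pairs, being uniform on the graph of the bijection $\upsilon_{\ast}$, has the right marginals. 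The rest is a routine transcription of the proof of Lemma~\ref{deco1}.
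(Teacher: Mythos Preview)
Your overall architecture is exactly the paper's: the same diagonal reflection $\upsilon$ about $\tilde D_{1}$, the same decomposition into $\sigma$- and $\alpha$-particles, the same identical/synthetic/antithetic trichotomy, and the same inductive verification of \eqref{eq011}--\eqref{eq013}. In that sense there is nothing to add.

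There is, however, one genuine slip that matters for correctness. Lemma~\ref{decodia} sits inside the proof of Proposition~\ref{prop2}, which concerns the \emph{periodic} nearest-neighbour process $(\delta_{t})$: each offspring moves to one of the $4$ neighbours, with no lazy step. The paper's proof of Lemma~\ref{decodia} explicitly uses ``the $4$ neighbors of the site of the parent''. Your construction instead places offspring ``uniformly among the $5$ sites at distance at most $1$'', introduces a lazy move fixed by $\upsilon_{\ast}$, and speaks of $\alpha$-offspring remaining on $\tilde D_{0}$ (resp.\ $\tilde D_{2}$). With the correct $4$-move dynamics none of that occurs: from $\tilde D_{0}$ the North and East children land on $\tilde D_{1}$ (and are relabelled $\sigma$), while the South and West children land on $\tilde D_{-1}$ (and stay $\alpha$); nothing stays on $\tilde D_{0}$. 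As written, your coupling therefore produces the wrong marginals for $\beta_{t}^{(0,0)}$ and $\beta_{t}^{(1,1)}$ in this lemma. Once you drop the lazy move and the corresponding clauses, your synthetic rule (North$\leftrightarrow$West, East$\leftrightarrow$South relabelled; West$\leftrightarrow$North, South$\leftrightarrow$East kept as $\alpha$) coincides with the paper's (i)--(iv), and the rest of your verification goes through verbatim. Your parity remark about even anti-diagonals is then genuinely relevant, since with $4$ moves the parity of $x+y$ alternates deterministically.
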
 
We show first that  Lemma \ref{decodia}  suffices to complete the proof, that is that is implies (\ref{desbrw2}), whereas the proof of Lemma \ref{decodia} is given immediately next. 

We let $S_{n}^{(i,i)} = \inf\{ t\colon \sigma_{t}^{(i,i)}(y) \geq 1, \mbox{for some } y \in \partial \Lambda_{n}^{2}\}$, for $i =0,1$. 
We also let $T_{n}^{(1,1)} =  \inf\{ t\colon \alpha_{t}^{(1,1)}(y) \geq 1, \mbox{for some } y \in  \partial_{N}\Lambda_{n}^{2}\cup \partial_{E}\Lambda_{n}^{2}\},$ and, furthermore, we let  $T_{n}^{(0,0)} =  \inf\{ t\colon \alpha_{t}^{(0,0)}(y) \geq 1, \mbox{for some } y \in  \partial_{S}\Lambda_{n}^{2} \cup \partial_{W}\Lambda_{n}^{2}\}$. Note that, due to (\ref{eq014}), we have that 
\begin{equation}\label{taus2}
\tau_{n}^{(i,i)}   = \min(S_{n} ^{(i,i)}, T_{n}^{(i,i)}), \mbox{ }  i =0,1. 
\end{equation}
Note that by (\ref{eq012}) we have that $S_{n} ^{(0,0)} = S_{n} ^{(1,1)}$. Further note that, by (\ref{eq013}), due to (\ref{note2}), (\ref{timesint}), (\ref{timesint2}), we have that $T_{n}^{(1,1)} \leq T_{n}^{(0,0)}$ a.s.. Thus, applying (\ref{taus2}) twice gives (\ref{desbrw2}).  

\begin{proof}[Proof of Lemma \ref{decodia}.]
Introducing some terminology is required in order to give the construction. Particles in $(\sigma_{t}^{(0,0)})$ will be called $\sigma^{(0,0)}$ particles, particles in $(\sigma_{t}^{(1,1)})$ will be called $\sigma^{(1,1)}$ particles, and, particles in $(\alpha_{t}^{(0,0)})$ will be called $\alpha^{(0,0)}$ particles, and particles in $(\alpha_{t}^{(1,1)})$ will be called $\alpha^{(1,1)}$ particles.
Let $\sigma^{(0,0)}, \sigma^{(1,1)}$ be a pair of particles positioned at the same site.  We say that their offspring is distributed in an \textit{identical} way whenever both particles produce the same number of offspring and their displacement is also identical.

Let $\alpha^{(0,0)}, \alpha^{(1,1)}$ be a pair of particles positioned at some site $(x,y)$ on $\tilde{D}_{0}$ and at $(\upsilon(x),\upsilon(y))$ on $\tilde{D}_{2}$ respectively.  We say that their offspring (at the next time instant) is distributed in a \textit{synthetic} way whenever the number of offspring of both particles are matched and, furthermore:

\begin{enumerate}[label=(\roman*)]
\item offspring of $\alpha^{(0,0)}$ placed at its North neighbor, i.e.\ at $(x,y+1)$, are \textbf{relabelled} as $\sigma^{(0,0)}$  particles and are matched to offspring of $\alpha^{(1,1)}$ placed at its West neighbor, i.e.\ at $(\upsilon(x)-1, \upsilon(y))$, which are {\bf relabelled} as $\sigma^{(1,1)}$ particles, 
\item offspring of $\alpha^{(0,0)}$ placed at its East neighbor, i.e.\ at $(x+1,y)$, is  \textbf{relabelled} as $\sigma^{(0,0)}$ particles and are matched to offspring of $\alpha^{(1,1)}$ placed at its South neighbor, i.e. at  $(\upsilon(x), \upsilon(y)-1)$, which are { \bf relabelled} as  $\sigma^{(1,1)}$ particles,
\item offspring of $\alpha^{(0,0)}$ placed at its West neighbor, i.e.\ at $(x-1,y)$, are matched to offspring of $\alpha^{(1,1)}$ placed at its North neighbor, i.e.\ at $(\upsilon(x), \upsilon(y)+1)$, and this offspring are {\bf not relabelled}, 
\item offspring of $\alpha^{(0,0)}$ placed at its South neighbor, i.e.\ at $(x,y-1)$, are matched to offspring of $\alpha^{(1,1)}$ placed at its East neighbor, i.e.\ at $(\upsilon(x), \upsilon(y)+1)$, and this offspring are {\bf not relabelled}. 
\end{enumerate}

Let $\alpha^{(0,0)}$  and $\alpha^{(1,1)}$ be a pair of particles at site $(x,y) \in SW_{1/2} \backslash \tilde{D}_{0}$  and at site $(\upsilon(x),\upsilon(y)) \in NE_{1/2} \backslash \tilde{D}_{2}$ respectively. We say that offspring of  $\alpha^{(0,0)}$ and  $\alpha^{(1,1)}$ are distributed in an \textit{antithetic} way whenever offspring of both particles are matched as in the  synthetic way but without relabelling, viz.\ offspring of  $\alpha^{(0,0)}$  and $\alpha^{(1,1)}$ particles will also be  $\alpha^{(0,0)}$ and $\alpha^{(1,1)}$ particles respectively.

Note that  (\ref{eq011}),  (\ref{eq012}),  (\ref{eq014}) and (\ref{eq013}) hold for $t=1$. We will assume that (\ref{eq011}),  (\ref{eq012}),  (\ref{eq014}) and (\ref{eq013}) hold for some time $t$ and show that they hold for $t+1$.  The stochastic comparison of $\sigma_{t}^{(0,0)}$, $\sigma_{t}^{(1,1)}$, $\alpha_{t}^{(0,0)}$, $\alpha_{t}^{(1,1)}$ at time $t+1$ may then be described as follows. 

\begin{enumerate}[label=(\alph*)]
\item  Offspring of $\sigma^{(0,0)}$ particles is distributed at time $t+1$ in an identical way to $\sigma^{(1,1)}$ particles. 
\item Offspring of $\alpha^{(0,0)}$ particles at $(x,y) \in \tilde{D}_{0}$ is distributed in a synthetic way to  $\alpha^{(1,1)}$ particles at $\upsilon(x,y) \in \tilde{D}_{2}$.
\item Offspring of $\alpha^{(0,0)}$ particles at $(x,y) \in SW_{1/2} \backslash \tilde{D}_{0}$ is distributed in an antithetic way to $\alpha^{(1,1)}$ particles at $\upsilon(x,y) \in NE_{1/2} \backslash \tilde{D}_{2}$. 
\end{enumerate}

Note that every $\sigma^{(0,0)}$ and $\alpha^{(0,0)}$ (resp.\ $\sigma^{1}$ and $\alpha^{1}$) particle at time $t$ generates independently of other $\sigma^{(0,0)}$ and $\alpha^{(0,0)}$ (resp.\ $\sigma^{1}$ and $\alpha^{1}$) particles a random number of offspring distributed according to $\calP$ and dies after one time unit. Furthermore, each offspring particle moves at a position chosen uniformly at random from the $4$ neighbors of the site of the parent.  Therefore, we have that the stochastic comparison described preserves the correct marginal distributions for both $(\beta_{t}^{(0,0)})$ and $(\beta_{t}^{(1,1)})$, and further, that we have
\begin{equation}\label{eqbetend}
\beta_{t+1}^{(0,0)} = \sigma_{t+1}^{(0,0)}  + \alpha_{t+1}^{(0,0)} \hspace{2mm}\mbox { and }\hspace{2mm} \beta_{t+1}^{(1,1)} = \sigma_{t+1}^{(1,1)}  + \alpha_{t+1}^{(1,1)}.  
\end{equation} 

Let $\tilde{\sigma}^{(0,0)}_{t+1}$ (resp.\  $\tilde{\sigma}^{(1,1)}_{t+1}$) be offspring at time $t+1$ of particles in  $\sigma^{(0,0)}_{t}$ (resp.\  $\tilde{\sigma}^{(1,1)}_{t}$). Let also $\bar{\sigma}^{(0,0)}_{t+1}$ be offspring of $\alpha^{(0,0)}$ particles that are located at time $t$ at $ \tilde{D}_{0}$ and which are placed at $ \tilde{D}_{1}$ at time $t+1$. Let in addition $\bar{\sigma}^{(1,1)}_{t+1}$  be offspring of $\alpha^{(1,1)}$ particles that are located at time $t$ at $ \tilde{D}_{2}$ and which are placed at $ \tilde{D}_{1}$ at time $t+1$.  Note that 
\begin{equation}\label{sigma00}
\sigma_{t+1}^{(i,i)} = \tilde{\sigma}^{(i,i)}_{t+1}+  \bar{\sigma}^{(i,i)}_{t+1}  \mbox{ for } i = 0,1. 
\end{equation}
By  (\ref{eq012}), we have that  (a) gives that 
\begin{equation}\label{sigmas10}
\tilde{\sigma}^{(1,1)}_{t+1} = \tilde{\sigma}^{(0,0)}_{t+1}. 
\end{equation}
Note that, by (\ref{eq013}) we have that $\Upsilon(\alpha_{t}^{(0,0)}(x,y))= \alpha_{t}^{(1,1)}(x,y)$,  $(x,y) \in \tilde{D}_{0}$  and, hence, by  (b), (i), and (ii), we have that 
\begin{equation}\label{sigmas20}
\bar{\sigma}^{(1,1)}_{t+1} = \bar{\sigma}^{(0,0)}_{t+1}. 
\end{equation}
Plugging (\ref{sigmas10}) and (\ref{sigmas20}) into (\ref{sigma00}) gives that
\begin{equation}\label{sigt}
\sigma_{t+1}^{(0,0)} = \sigma_{t+1}^{(1,1)}.  
\end{equation}  
Note in addition that from (\ref{eq014}), (b) together with (i) and (ii), give that  
\begin{equation}\label{a0}
\alpha_{t+1}^{(0,0)}(x,y) = 0, \forall (x,y) \in NE_{1/2}, \mbox{ and } \alpha_{t+1}^{(1,1)}(x,y) = 0, \forall (x,y) \in SW_{1/2},
\end{equation}
Note in addition that from (\ref{eq013}), (c) and (iii), give that
\begin{equation}\label{phi0}
\Phi(\alpha_{t+1}^{(0,0)}(x,y)) = \alpha_{t+1}^{(1,1)}(x,y). 
\end{equation}
Hence, from (\ref{eqbetend}), (\ref{sigt}), (\ref{a0}) and (\ref{phi0}) we have that  (\ref{eq011}),  (\ref{eq012}),  (\ref{eq014}) and (\ref{eq013}) respectively hold for $t+1$ and, therefore, that the proof is complete. 
\end{proof}
\end{proof}
\begin{proof}[Proof of Proposition \ref{prop1}.] Since (\ref{prop1preq12}) follows from  (\ref{prop1preq2}), it suffices that we only show the latter.  We will prescribe a stochastic comparison such that   
\begin{equation}\label{desbrw3}
\tau_{n}^{(2,0)} \leq \tau_{n}^{(0,0)}, \mbox{ a.s..}
\end{equation}
The following definitions are in order. We let $V_{k} = \{ (x,y) \in \Z^{2} \colon x = k\}$ and we let $H_{k} = \{ (x,y) \in \Z^{2} \colon y = k\}, k \in \Z$.  We note that $V_{1}$ will play the role of a symmetry axis in our stochastic comparison below. We let 
\[
W_{1} = V_{0} \cup V_{-1}, \dots \cup V_{-n+2} \mbox{ and } E_{1} = V_{2} \cup V_{3}, \dots\cup V_{n}. 
\]
We let $\psi$ be the bijection of every point in $W_{1}$ to its mirror image point with respect to $V_{1}$ in $E_{1}$, which is, we let $\psi\colon (x,y) \mapsto (-x+2, y), \mbox{ }  (x,y) \in W_{1}$,
where we note in particular that $\psi$ maps points on $V_{-n+2}  \cap \Lambda_{n}^{2}$ to points with the same second coordinate on $V_{n} \cap \Lambda_{n}^{2}$, which is that 
\begin{equation}\label{notefin}
\psi(-n+2,y) = (n,y), 
\end{equation}
$y \in \Z$.  In addition, we let 
\[
\Psi(f(x,y)) = f(\psi(x,y)),
\]
$f \colon \Lambda_{n}^{2} \rightarrow \{0,1,\dots\}$. 

In Lemma \ref{gdeco1} stated next we consider $(\gamma_{t}^{x})$, an inhomogeneous branching random walk which is a generalization of  $(\delta_{t}^{x})$. The corresponding extended formulation of Lemma \ref{gdeco1} will be required in the proof of Corollary \ref{cor1}, where we will invoke this statement in its full generality. Lemma \ref{gdeco1}  regards a stochastic comparison of $(\gamma_{t}^{(0,0)})$ and $(\gamma_{t}^{(2,0)})$ that allows us to decompose each one of these processes into two processes which are related amongst them. 

\begin{lemma}\label{gdeco1}
Let $(\gamma_{t}^{x})$ be the generalization of $(\delta_{t}^{x})$ in which each particle may survive in the subsequent generation with probability $\pi \geq 0$, independently of everything else.  (Note that the process  $(\delta_{t}^{x})$ may then be retrieved by setting $\pi =0$). 
We have that 
\begin{equation}\label{geq11}
\gamma_{t}^{(0,0)} = \sigma_{t}^{(0,0)}  + \alpha_{t}^{(0,0)} \hspace{2mm}\mbox { and }\hspace{2mm} \gamma_{t}^{(2,0)} = \sigma_{t}^{(2,0)}  + \alpha_{t}^{(2,0)}, 
\end{equation}
where $( \sigma_{t}^{(0,0)})$, $(\alpha_{t}^{(0,0)})$, $( \sigma_{t}^{(2,0)})$, $(\alpha_{t}^{(2,0)})$ satisfy the following properties:
\begin{equation}\label{geq12}
\sigma_{t}^{(0,0)} = \sigma_{t}^{(2,0)},  
\end{equation}
\begin{equation}\label{geq14}
\alpha_{t}^{(0,0)}(x,y) = 0, \forall (x,y) \in E_{1}, \mbox{ and } \alpha_{t}^{(2,0)}(x,y) = 0, \forall (x,y) \in W_{1},
\end{equation}
\begin{equation}\label{geq13}
\Psi(\alpha_{t}^{(0,0)}) = \alpha_{t}^{(2,0)}, 
\end{equation}
where we let $\alpha_{0}^{(0,0)}(0,0)  = 1$ and, $\alpha_{0}^{(0,0)}(x,y)  = 0$, otherwise; and, we let $\alpha_{0}^{(2,0)}(2,0) = 1$ and, $\alpha_{0}^{(2,0)}(x,y)  = 0$, otherwise; and we also let $\sigma_{0}^{(0,0)}(x,y) = \sigma_{0}^{(2,0)}(x,y) = 0$, $\forall (x,y)$.
\end{lemma}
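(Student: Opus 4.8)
The plan is to repeat, almost verbatim, the coupling argument behind Lemma~\ref{deco1} (equivalently Lemma~\ref{decodia}), now with the symmetry axis $V_{1/2}$ replaced by the lattice column $V_1$, the reflection $\phi$ replaced by $\psi$, the half-spaces $W_{1/2},E_{1/2}$ replaced by $W_1,E_1$, and the aperiodic walk replaced by the generalized periodic walk $(\gamma_t)$. Concretely, I would build the four auxiliary processes on one probability space, with the prescribed initial data, and maintain (\ref{geq11})--(\ref{geq13}) for every $t$ by induction on $t$. The $\sigma$-particles are to be the particles counted identically in $\gamma_t^{(0,0)}$ and $\gamma_t^{(2,0)}$; the $\alpha$-particles are to be exact $\psi$-mirror images of one another, with every $\alpha^{(0,0)}$-particle confined to $\{x\le 0\}$ and every $\alpha^{(2,0)}$-particle to $\{x\ge 2\}$, which in particular gives (\ref{geq14}) since these sets miss $E_1$, resp.\ $W_1$.

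The coupled one-step evolution will be governed by three rules, mirroring those in the proof of Lemma~\ref{deco1}. (a) A $\sigma^{(0,0)}$-particle and the $\sigma^{(2,0)}$-particle at the same site reproduce \emph{identically}: matched offspring count, matched survival indicator, matched displacements --- this keeps $\sigma_{t+1}^{(0,0)}=\sigma_{t+1}^{(2,0)}$. (b) An $\alpha^{(0,0)}$-particle on $V_0$ is paired with the $\alpha^{(2,0)}$-particle at its $\psi$-image on $V_2$ (a partner exists by (\ref{geq13})) and their offspring are produced in a \emph{synthetic} way: the offspring count and the survival indicator are matched, and the four neighbour-directions are coupled via the reflection fixing $V_1$ (West $\leftrightarrow$ East, North $\leftrightarrow$ North, South $\leftrightarrow$ South, survive $\leftrightarrow$ survive). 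The feature that makes the periodic walk amenable to this --- and that fails for the aperiodic walk of Lemma~\ref{deco1}, where the axis lies strictly between lattice columns --- is that the East-neighbour of $(0,y)\in V_0$ and the West-neighbour of its mirror $(2,y)\in V_2$ are the \emph{same} lattice point $(1,y)\in V_1$. Hence the offspring of the $\alpha^{(0,0)}$-particle that step East and the offspring of its partner that step West land at an identical site of $V_1$, and we \textbf{relabel} these as $\sigma^{(0,0)}$-, resp.\ $\sigma^{(2,0)}$-particles: this is consistent exactly because they sit at the same site. Every remaining descendant (the West, North and South steps, and the surviving particle if any) stays an $\alpha$-particle, and the two $\alpha$-families remain $\psi$-mirror images lying in $\{x\le 0\}$, resp.\ $\{x\ge 2\}$. (c) An $\alpha^{(0,0)}$-particle on $\{x\le -1\}$ is paired with its mirror on $\{x\ge 3\}$ and their offspring are produced in an \emph{antithetic} way, i.e.\ the synthetic rule with \emph{no} relabelling; this is legitimate since from a site of first coordinate $\le -1$ all four neighbours and the site itself have first coordinate $\le 0$, so nothing reaches $V_1$.

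It then remains to run the induction, which is routine and entirely parallel to the proof of Lemma~\ref{decodia}. The base case is the prescribed initial data (and its first iterate, handled as in Lemma~\ref{deco1}), using the definition of $\psi$ and (\ref{notefin}). For the inductive step one writes $\sigma_{t+1}^{(i,i)}$ as the offspring of $\sigma_t^{(i,i)}$ plus the offspring of the $V_0$-, resp.\ $V_2$-dwelling $\alpha^{(i,i)}$-particles that were relabelled onto $V_1$; rule (a) with $\sigma_t^{(0,0)}=\sigma_t^{(2,0)}$ deals with the first part, rule (b) with $\Psi(\alpha_t^{(0,0)})=\alpha_t^{(2,0)}$ restricted to $V_0$ deals with the second, yielding (\ref{geq12}) at $t+1$; the confinement of $\alpha_t^{(0,0)}$ to $\{x\le 0\}$ --- hence (\ref{geq14}) at $t+1$ --- is precisely what the relabelling in (b) enforces; and (\ref{geq13}) at $t+1$ holds because (b) and (c) are $\psi$-equivariant. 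One checks finally that each $\sigma$- and each $\alpha$-particle reproduces as an independent copy of the $(\gamma_t)$-mechanism ($K\sim\calP$ offspring to the four neighbours, plus survival with probability $\pi$), so $\gamma_t^{(0,0)}=\sigma_t^{(0,0)}+\alpha_t^{(0,0)}$ and $\gamma_t^{(2,0)}=\sigma_t^{(2,0)}+\alpha_t^{(2,0)}$ carry the correct marginals, which is (\ref{geq11}). The only delicate point is rule (b) near the axis; but, as explained, for the periodic walk the relabelled offspring are forced onto the common site $(1,y)$, and the survival-with-probability-$\pi$ feature adds nothing new, ``surviving'' being a move that respects $\psi$ exactly like the North and South steps.
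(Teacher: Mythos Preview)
Your proposal is correct and follows essentially the same approach as the paper: the same $\sigma/\alpha$ decomposition, the same identical/synthetic/antithetic coupling rules (a)--(c) with reflection across $V_1$, and the same induction on $t$, with the survival indicator simply carried along as an extra $\psi$-respecting ``move''. Your treatment is in fact slightly more explicit than the paper's on the North/South offspring in the synthetic rule, but the content is identical.
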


\begin{remark}\label{remgdeco1}
\textup{Roughly speaking, the idea behind the proof of Lemma \ref{gdeco1} is as follows. Particles in $\gamma_{t}^{(0,0)}$ belong in $\alpha_{t}^{(0,0)}$ for as long as they occupy sites in $W_{1}$, whereas
those in $\gamma_{t}^{(2,0)}$ belong in $\alpha_{t}^{(2,0)}$ for as long as they occupy sites in $E_{1}$. Whenever particles in $\alpha_{t}^{(0,0)}$ (resp.\ $\alpha_{t}^{(2,0)}$) give birth on $V_{1}$, the newly born particles belong for all future times in $\sigma_{t}^{(0,0)}$ (resp.\ $\sigma_{t}^{(2,0)}$) instead. Since the displacement of descendant particles in $\alpha_{t}^{(0,0)}$ and in $\alpha_{t}^{(2,0)}$ is antithetic w.r.t.\ $V_{1}$, all births on $V_{1}$ occur at the same time in both $\gamma_{t}^{(0,0)}$ and $\gamma_{t}^{(2,0)}$. Since in addition the displacement of descendant particles in $\sigma_{t}^{(0,0)}$ and in $\sigma_{t}^{(2,0)}$ is identical, particles in $\gamma_{t}^{(0,0)}$ (resp.\ $\gamma_{t}^{(2,0)}$) can escape to $E_{1}$ (resp.\ $W_{1}$) only if belonging to $\sigma_{t}^{(0,0)}$ (resp.\ $\sigma_{t}^{(2,0)}$).}

\end{remark}
We show first that  Lemma \ref{gdeco1}  suffices to complete the proof, that is that is implies (\ref{desbrw3}), whereas the proof of Lemma \ref{gdeco1} is given immediately next.

We let $S_{n}^{i} = \inf\{ t\colon \sigma_{t}^{i}(y) \geq 1, \mbox{for some } y \in \partial \Lambda_{n}^{2}\}$,  for $i =(0,0), (0,2)$.  Furthermore, we let  $U_{n}^{i} =  \inf\{ t\colon \alpha_{t}^{i}(y) \geq 1, \mbox{for some } y \in (H_{n} \cup H_{-n})\cap \partial \Lambda_{n}^{2} \}$, for $i =(0,0), (0,2)$. We also let $T_{n}^{(0,0)} =  \inf\{ t\colon \alpha_{t}^{(0,0)}(y) \geq 1, \mbox{for some } y \in  V_{-n} \cap \partial \Lambda_{n}^{2} \},$ and, further, we let  $T_{n}^{(2,0)} =  \inf\{ t\colon \alpha_{t}^{(2,0)}(y) \geq 1, \mbox{for some } y \in  V_{n} \cap \partial \Lambda_{n}^{2} \}$. Note that by (\ref{geq14}), we have that 
\begin{equation}\label{tau2}
\tau_{n}^{i}   = \min(S_{n} ^{i}, T_{n}^{i}, U_{n}^{i}), \mbox{ }  i =(0,0), (0,2). 
\end{equation}
Note that by (\ref{geq12}) we have that $S_{n} ^{(0,0)} = S_{n} ^{(2,0)}$. Further, note that by (\ref{geq13}) we also have that $U_{n}^{(0,0)} = U_{n}^{(2,0)}$. Note in addition that by (\ref{notefin}) and (\ref{geq13}), we also have that $T_{n}^{(2,0)} \leq T_{n}^{(0,0)}$ a.s.. Thus, applying (\ref{tau2}) twice gives (\ref{desbrw3}).

\begin{proof}[Proof of Lemma \ref{gdeco1}.] Introducing the following terminology is required in order to give the construction. Particles in $(\sigma_{t}^{(0,0)})$ will be called $\sigma^{(0,0)}$ particles, particles in $(\sigma_{t}^{(2,0)})$ will be called $\sigma^{(2,0)}$ particles and similarly for $(\alpha_{t}^{(0,0)})$ and $(\alpha_{t}^{(2,0)})$. If $\sigma^{(0,0)}, \sigma^{(2,0)}$ is a pair of particles positioned at the same site, we then say that their offspring is distributed (at the next time instant) in an \textit{identical} way whenever the number of offspring produced by both particles is matched (i.e.\ it is defined by means of the same r.v.) and, furthermore, the displacement of their progeny is determined identically (i.e.\ by means of the same r.v.'s), and in addition, particle $\sigma^{(0,0)}$ survives (at time $t+1$) if and only if $\sigma^{(2,0)}$ survives.  
Further, if $\alpha^{(0,0)}, \alpha^{(2,0)}$ is a pair of particles positioned at some site on $(x,y) \in V_{0}$ and on $\psi(x,y) \in V_{2}$ respectively, we then say that offspring of $\alpha^{(0,0)}$ and of $\alpha^{(2,0)}$ is distributed in a \textit{synthetic} way whenever the number of offspring produced by both particles is matched and, furthermore:
\begin{enumerate}[label=(\roman*)]   
\item offspring of $\alpha^{(0,0)}$ placed at $V_{1}$ at the site East of $(x,y)$  is \textbf{relabelled} as $\sigma^{(0,0)}$  particles and match offspring of  $\alpha^{(2,0)}$ placed at the site West of $\psi(x,y)$ which is {\bf relabelled } as $\sigma^{(2,0)}$ particles,
\item offspring of $\alpha^{(0,0)}$ placed at the site West of $(x,y)$ is matched with  offspring of  $\alpha^{(2,0)}$ placed at the site East of $\psi(x,y)$, 
\item particle $\alpha^{(0,0)}$ survives (at the next time instant) if and only if $\alpha^{(2,0)}$ survives, 
\end{enumerate}

Further, if $\alpha^{(0,0)}$  and $\alpha^{(2,0)}$ is a pair of particles at site $(x,y) \in W_{1} \backslash V_{0}$  and at site $\psi(x,y) \in E_{1} \backslash V_{2}$ respectively, we say that offspring of  $\alpha^{(0,0)}$ and  $\alpha^{(2,0)}$ is distributed in an \textit{antithetic} way whenever the number of offspring of both particles is matched, and, furthermore:
\begin{enumerate}[label=(\roman**)]
\item offspring of $\alpha^{(0,0)}$ placed at the North and South neighbor of $(x,y)$ is matched with offspring of $\alpha^{(2,0)}$ placed at the North and South neighbor of $\psi(x,y)$ respectively, whereas, 
\item offspring of $\alpha^{(0,0)}$ placed at the site West (resp.\ East) of $(x,y)$ is matched with  offspring of  $\alpha^{(2,0)}$ placed at the site East  (resp.\ West) of $\psi(x,y)$, 
\item  particle $\alpha^{(0,0)}$ survives (at the next time instant) if and only if $\alpha^{(2,0)}$ survives, 
\end{enumerate}
where offspring of  $\alpha^{(0,0)}$  and $\alpha^{(2,0)}$ particles will also be $\alpha^{(0,0)}$ and  $\alpha^{(2,0)}$ particles respectively (i.e.\ no relabelling occurs). 

Note that (\ref{geq11}),  (\ref{geq12}),   (\ref{geq14}), (\ref{geq13}) hold for $t=1$.  We will assume that (\ref{geq11}),  (\ref{geq12}),  (\ref{geq14}) and (\ref{geq13}) hold for some $t$ and show that they hold for $t+1$.  The evolution of $\sigma_{t}^{(0,0)}$, $\sigma_{t}^{(2,0)}$, $\alpha_{t}^{(0,0)}$, $\alpha_{t}^{(2,0)}$ at time $t+1$ may then be described as follows. 
\begin{enumerate}[label=(\alph*)]
\item  Offspring of $\sigma^{(0,0)}$ particles is distributed in an identical way to $\sigma^{(2,0)}$ particles. 
\item Offspring of $\alpha^{(0,0)}$ particles at $(0,y) \in V_{0}$ is distributed in a synthetic way to  $\alpha^{(2,0)}$ particles at $\psi(2,y) \in V_{2}$. 
\item Offspring of $\alpha^{(0,0)}$ particles at $(x,y) \in W_{1} \backslash V_{0}  $ is distributed in an antithetic way to $\alpha^{(2,0)}$ particles at $\psi(x,y) \in E_{1} \backslash V_{2}$. 
\end{enumerate} 
Note that every $\sigma^{(0,0)}$ and $\alpha^{(0,0)}$ (resp.\ $\sigma^{(2,0)}$ and $\alpha^{(2,0)}$) particle generates independently of other $\sigma^{(0,0)}$ and $\alpha^{(0,0)}$ (resp.\ $\sigma^{(2,0)}$ and $\alpha^{(2,0)}$) particles a random number of offspring distributed according to $\calP$ and dies after one time unit with probability $\pi$. Note further that each offspring particle moves at a position chosen uniformly at random from the $4$ sites located at most at distance $1$ from the location of the parent. We thus have that the stochastic comparison described preserves the correct marginal distributions for both $(\gamma_{t}^{(0,0)})$ and $(\gamma_{t}^{(2,0)})$, and further, that 
\begin{equation}\label{eqbetend0}
\gamma_{t+1}^{(0,0)} = \sigma_{t+1}^{(0,0)}  + \alpha_{t+1}^{(0,0)} \hspace{2mm}\mbox { and }\hspace{2mm} \gamma_{t+1}^{(2,0)} = \sigma_{t+1}^{(2,0)}  + \alpha_{t+1}^{(2,0)}.  
\end{equation} 

Let $\tilde{\sigma}^{(0,0)}_{t+1}$ (resp.\  $\tilde{\sigma}^{(2,0)}_{t+1}$) be offspring at time $t+1$ of particles in  $\sigma^{(0,0)}_{t}$ (resp.\  $\tilde{\sigma}^{(2,0)}_{t}$). Let also $\bar{\sigma}^{(0,0)}_{t+1}$ be offspring of $\alpha^{(0,0)}$ particles that are located at time $t$ at $V_{0}$ and which are placed at $V_{1}$ at time $t+1$. Let in addition $\bar{\sigma}^{(2,0)}_{t+1}$  be offspring of $\alpha^{(2,0)}$ particles that are located at time $t$ at $V_{2}$ and which are placed at $V_{1}$ at time $t+1$.  Note that 
\begin{equation}\label{sigma0}
\sigma_{t+1}^{i} = \tilde{\sigma}^{i}_{t+1}+  \bar{\sigma}^{i}_{t+1}  \mbox{ for } i = (0,0), (2,0). 
\end{equation}
By  (\ref{geq12}), we have that  (a) gives that 
\begin{equation}\label{sigmas1}
\tilde{\sigma}^{(2,0)}_{t+1} = \tilde{\sigma}^{(0,0)}_{t+1}. 
\end{equation}
Note that, by (\ref{geq13}) we have that $\Psi(\alpha_{t}^{(0,0)}(0,y))= \alpha_{t}^{(2,0)}(2,y)$ and, hence, by  (b), (i), and (ii), we have that 
\begin{equation}\label{sigmas2}
\bar{\sigma}^{(2,0)}_{t+1} = \bar{\sigma}^{(0,0)}_{t+1}. 
\end{equation}
Plugging (\ref{sigmas1}) and (\ref{sigmas2}) into (\ref{sigma0}) gives that
\begin{equation}\label{sigt0}
\sigma_{t+1}^{(0,0)} = \sigma_{t+1}^{(2,0)}.  
\end{equation}  

Note further that from (\ref{geq14}), (b) together with (i), (ii) and (iii), give that  
\begin{equation}\label{a00}
\alpha_{t+1}^{(0,0)}(x,y) = 0, \forall (x,y) \in E_{1}, \mbox{ and } \alpha_{t+1}^{(2,0)}(x,y) = 0,  \forall (x,y) \in W_{1}. 
\end{equation}
Note in addition that from (\ref{geq13}), (c) by (i*), (ii*), (iii*), and (iii), give that
\begin{equation}\label{phi00}
\Psi(\alpha_{t+1}^{(0,0)}(x,y)) = \alpha_{t+1}^{(2,0)}(x,y). 
\end{equation}

Hence, by (\ref{eqbetend0}), (\ref{sigt0}), (\ref{a00}) and (\ref{phi00}) we have that (\ref{geq11}),  (\ref{geq12}),  (\ref{geq14}) and (\ref{geq13}) respectively hold for $t+1$ and, thus, the proof is complete. 
\end{proof}

\end{proof}
\end{proof}

\begin{proof}[Proof of Corollary \ref{cor1}.] 
Let $(\gamma^{(0,0)}_{n}, \gamma^{(2,0)}_{n})_{n \geq 0}$ be as in Lemma \ref{gdeco1}. Let $\Dcal_{2\Z} = \bigcup_{k \in 2\Z} D_{k}$, where $D_{k}= \{(x,y) \in \Z_{+}^{2} \colon y = - x+ k\}$. We first show that  
\begin{equation}\label{eqcorpr}
\gamma_{2n}^{(0,0)}(x,y) \leq_{st.} \gamma_{2n}^{(0,0)}(x+2,y), 
\end{equation}
$\forall (x,y) \in \Dcal_{2\Z}$.  To this end, let $\Dcal_{2\Z}'$ be the set of $(x,y) \in \Dcal_{2\Z}$  such that $x \geq 2$. By  (\ref{geq11}) due to  (\ref{geq14}), we have that if $(x,y) \in \Dcal_{2\Z}'$, then
\begin{eqnarray*}\label{gammasig}
\gamma_{2n}^{(0,0)}(x,y) & =& \sigma_{2n}^{(0,0)}(x,y)  \nonumber \\
 & =& \sigma_{2n}^{(2,0)}(x,y) \leq  \gamma_{2n}^{(2,0)}(x,y), 
\end{eqnarray*}
a.s., where in the second line we used (\ref{geq12}), and then (\ref{geq11}). By invariance of the law of  $(\gamma_{n})$ under shifts of the starting point and reflections over the coordinate axis, the last display above gives that $\gamma_{2n}^{(x,y)}(0,0) \leq_{st.} \gamma_{2n}^{(x,y)}(2,0)$ and hence, that $\gamma_{2n}^{(0,0)}(x,y) \leq_{st.} \gamma_{2n}^{(0,0)}(x-2,y)$, which gives (\ref{eqcorpr}). 

Let $(x,y) \in \Z_{+}^{2}$ and $t \in \R_{+}$ be fixed. Let also $N \geq 1$ be sufficiently large in order that $\sqrt{1-\lambda/N}<1$. Let $^{N}\gamma_{n}^{(0,0)}$ be the process with $\pi_{N} = \sqrt{1-1/N}$ and offspring distribution $\mathcal{P}_{N} = (P_{i, N})_{i \geq 0}$ such that $P_{i,N} =  \pr(\sum_{j =1}^{4} X_{j, N} = i)$, where $X_{j, N}$ are independent Bernoulli parameter $\sqrt{\lambda/N}$.  Based on the fact that, if $B_{n}$ is Binomial parameters $n \geq 1$ and $c/n$, and $B$ is Poisson parameter $c >0$, then, as $n \rightarrow \infty$, $B_{n} \stackrel{d}{\rightarrow} B$, standard arguments (see, for instance, \cite{D}), yield that, as $N \rightarrow \infty$, 
\begin{equation}\label{eqcorfin}
^{N}\gamma_{2 [Nt]}^{(0,0)} (2x, 2y) \stackrel{d}{\rightarrow} \zeta_{t}^{o}(x, y) \hspace{2mm} \mbox{ and } \hspace{2mm} ^{N}\gamma_{2 [Nt]}^{(0,0)} (2(x+1), 2y) \stackrel{d}{\rightarrow} \zeta_{t}^{o}(x+1, y), 
\end{equation}
where $\stackrel{d}{\rightarrow}$, and $[\cdot]$, denote convergence in distribution, and integer part, respectively. Since $\{ (x,y) \in \Z_{+}^{2} \colon x ,y \in 2\Z\} \subset \Dcal_{2\Z}$,    we have that (\ref{eqcorpr}) and  (\ref{eqcorfin}) imply (\ref{cor1des}) and, thus, the proof is complete. 
\end{proof}
\begin{remark}
\textup{We finally note that directly analogously to the proof of (\ref{eqcorpr}) above we may additionally deduce that $\gamma_{2n+1}^{(0,0)}(x,y) \leq_{st.} \gamma_{2n+1}^{(0,0)}(x+2,y)$, $\forall  (x,y) \in \Dcal_{2\Z+1}$,  $\Dcal_{2\Z+1} = \bigcup_{k \in 2\Z+1} D_{k}$, and therefore, in particular that, if $(\delta_{t}^{o})$ is the process considered in Theorem \ref{thmbr2} started by a particle at the origin, then 
\[
 x \preceq y  \hspace{2mm}\Longrightarrow\hspace{2mm}  \delta_{t}^{o}(x) \leq_{st.} \delta_{t}^{o}(y), 
\]
for all $x, y$ such that $\delta_{t}^{o}(x), \delta_{t}^{o}(y) \not= 0$ a.s., $t \geq 0$. Likewise, by invoking Lemma \ref{deco1} instead of Lemma \ref{gdeco1}, we may also derive that if $(\beta_{t}^{o})$ is the process considered in Theorem \ref{thmbr} started by a particle at the origin, then   $x \preceq y  \Longrightarrow \beta_{t}^{o}(x) \leq_{st.} \beta_{t}^{o}(y)$,  result shown by means of a different approach which extends the reflection principle for random walks in \cite{LZ}, Lemma 11.}
\end{remark}



\vspace{3mm}
\noindent \scriptsize{{\bf Acknowledgement}: This work was funded by the Austrian Science Fund (FWF), grant  \# Y760.}
\vspace{-4mm}

\bibliographystyle{plain}

\end{document}